\newtheorem{theorem}{Theorem}[section]
\newtheorem{proposition}[theorem]{Proposition}
\newtheorem{lemma}[theorem]{Lemma}
\theoremstyle{definition}
\newtheorem{example}[theorem]{Example}
\begin{document}

\title[Reversibility of LCA on Cayley Tree with PBC]{Reversibility of Linear Cellular Automata on Cayley Trees with Periodic Boundary Condition}

\keywords{Cellular automata; Cayley tree; reversibility; matrix presentation; peridoic boundary condition}

\subjclass{Primary 37B15}

\author{Chih-Hung Chang}
\author{Jing-Yi Su}
\address{Department of Applied Mathematics, National University of Kaohsiung, Kaohsiung 81148, Taiwan, ROC.}
\email{chchang@nuk.edu.tw; snowinds127@gmail.com}
\baselineskip=1.2\baselineskip

\begin{abstract}
While one-dimensional cellular automata have been well studied, there are relatively few results about multidimensional cellular automata; the investigation of cellular automata defined on Cayley trees constitutes an intermediate class. This paper studies the reversibility of linear cellular automata defined on Cayley trees with periodic boundary condition, where the local rule is given by $f(x_0, x_1, \ldots, x_d) = b x_0 + c_1 x_1 + \cdots + c_d x_d \pmod{m}$ for some integers $m, d \geq 2$. The reversibility problem relates to solving a polynomial derived from a recurrence relation, and an explicit formula is revealed; as an example, the complete criteria of the reversibility of linear cellular automata defined on Cayley trees over $\mathbb{Z}_2$, $\mathbb{Z}_3$, and some other specific case are addressed. Further, this study achieves a possible approach for determining the reversibility of multidimensional cellular automata, which is known as a undecidable problem.
\end{abstract}

\maketitle

\section{Introduction}

Cellular automaton (CA) is a particular class of dynamical systems introduced by Ulam and von Neumann as a model for self-production and is widely studied in multidisciplinary areas such as physics, biology, image processing, cryptography, and pseudo-random number generation \cite{CCD-CMA1999, DC-IS2004, FCC-PRE1998, KCD+-CMA1999, YZ-IS2009}. One-dimensional CA consists of infinite lattice with finite states and a local rule; the celebrated works of Hedlund and Wolfram make a decisive impulse to the mathematical study of CA (see \cite{Hed-MST1969, Wol-2002} and the references therein).

A cellular automaton is called reversible if every current configuration is associated with exactly one past configuration. While the reversibility of one-dimensional CAs is elucidated \citep{ION-JCSS1983, Morit-2012, Nasu-TAMS2002}, Kari indicates that the reversibility of multidimensional CAs is undecidable \cite{Kari-PD1990, Kari-JCSS1994}. Recently, the reversibility problem for one-dimensional cellular automata with boundary conditions has been studied \cite{CAS-JSP2011, ER-AMC2007a}; it is shown that a reversibly linear CA is either a Bernoulli automorphism or non-ergodic \cite{CC-IS2016}. Many researchers have been devoted to investigating the reversibility problem of multidimensional linear cellular automata under boundary conditions; however, there is no algorithm for determining whether a multidimensional linear cellular automaton is reversible \cite{KSA-TJM2016, SAK-IJMPC2012, YWX-IS2015}.

The notion of CAs have been extended to the case where the underlying space is the Cayley tree of a finitely generated group or semigroup (see \cite{AB-TCS2013, BC-2015, CCF+-TCS2013} and the references therein). Note that the grid $\mathbb{Z}^d$ is the Cayley tree of the free abelian group with $d$ generators. In other words, CAs defined on Cayley trees constitute an intermediate class in between one-dimensional and multidimensional CAs.

Some interesting phenomena are observed in reversible CA defined on Cayley trees. Figure \ref{fig:p3} shows an eventually periodic orbit of a linear CA defined on binary Cayley tree of height $5$ over three symbols. Furthermore, every configuration in the orbit is symmetric.

\begin{figure}
\begin{center}
\includegraphics[scale=0.5]{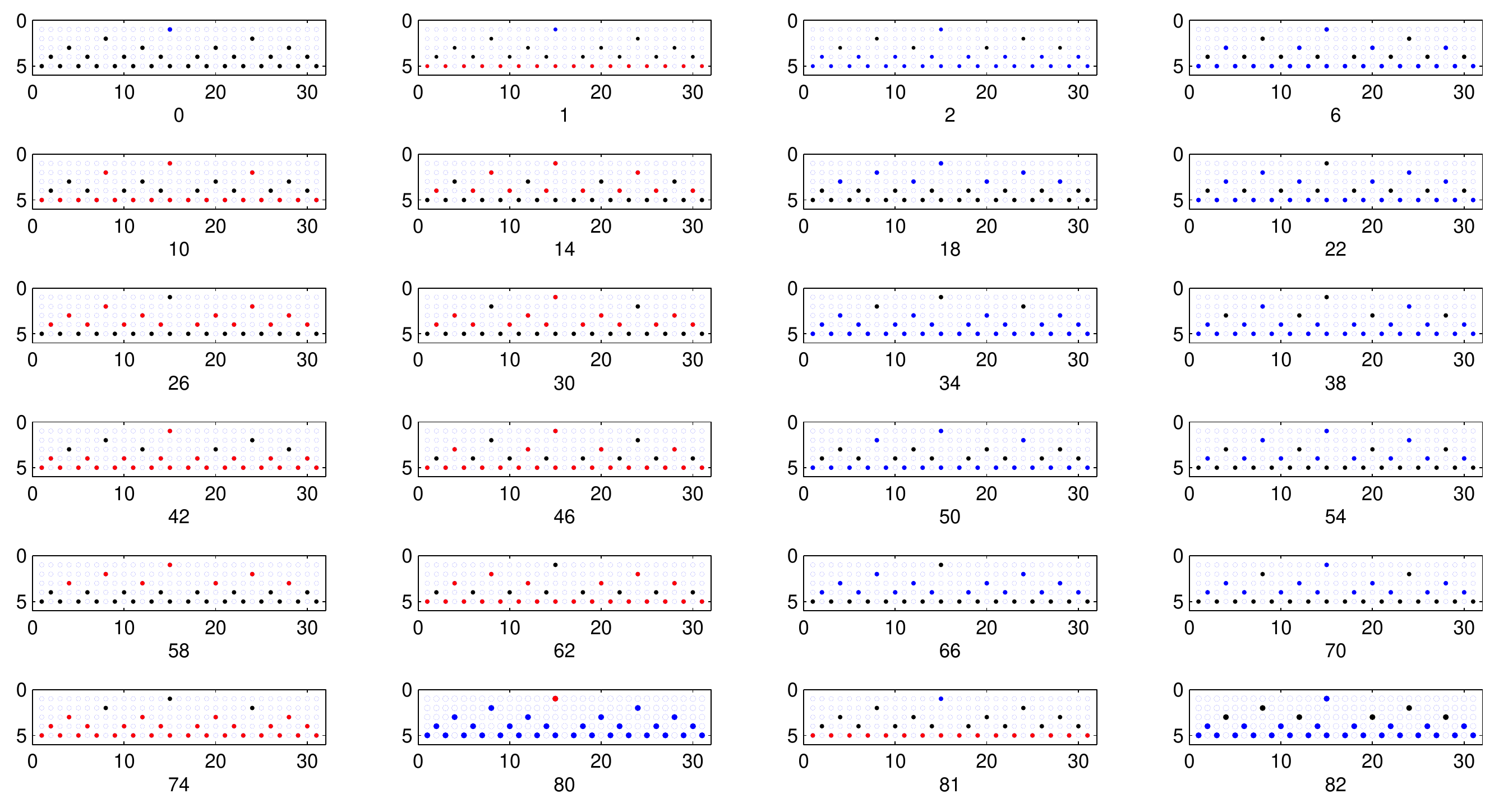}
\end{center}
\caption{An eventually periodic orbit of CA defined on binary tree with height $5$, where the local rule is given by $f(x_0, x_1, x_2) = x_0 + x_1 + x_2 \pmod{3}$. The number below each figure indicates the number of the time step; it is seen that this is an eventually periodic orbit with period $80$. The color black, blue, and red represents $0$, $1$, and $2$, respectively.}
\label{fig:p3}
\end{figure}

In this article, we investigate the reversibility of linear CAs defined on Cayley trees with periodic boundary condition; the matrix algebra has been used for the investigation. We define the matrix presentation of a linear CA defined on a Cayley tree of height $n$, and then characterize the reversibility by demonstrating an explicit formula of the determinant of its related matrix. The main difficulty comes from the exponential growth rate of the dimension of the matrix whenever $n$ increases. Furthermore, the complete criteria for the reversibility of linear CAs defined on Cayley trees over $\mathbb{Z}_2$, $\mathbb{Z}_3$, and some other condition are addressed.

The rest of this paper is organized as follows. The upcoming section introduces some preliminaries and the reversibility of linear CAs defined on binary Cayley tree. Section $3$ elaborates the explicit criteria for the reversibility of linear CAs in some cases. Section $4$ addresses the proofs of theorems stated in Section $2$ while Section $5$ extends all the results to general Cayley trees. Some discussion and conclusion are given in Section $6$.

\section{Reversibility of CA on Binary Tree}

This section investigates the reversibility of a cellular automaton defined on binary Cayley tree for the clarification; the discussion extends to cellular automata defined on general Cayley trees and is illustrated later.

We start with some basic definitions of symbolic dynamics on infinite binary trees. Let $\Sigma = \{1, 2\}$ and let $\Sigma^*$ be the set of words over $\Sigma$; more specifically, $\Sigma^* = \bigcup_{n \geq 0} \Sigma^n$, where $\Sigma^n = \{w_1 w_2 \cdots w_n: w_i \in \Sigma \text{ for } 1 \leq i \leq n\}$ is the set of words of length $n$ for $n \in \mathbb{N}$ and $\Sigma^0 = \{\epsilon\}$ consists of the empty word $\epsilon$. An \emph{infinite tree} $t$ over a finite alphabet $\mathcal{A}=\{0, 1, \ldots, m-1\}$, herein $m \geq 2$, is a function from $\Sigma^*$ to $\mathcal{A}$; a \emph{node} of an infinite tree is a word of $\Sigma^*$ while the empty word relates to the root of the tree. Suppose that $x$ is a node of a tree; $x$ has two \emph{children} $x1$ and $x2$, and $x$ is the \emph{parent} of $x1$ and $x2$. Furthermore, a node without children is called a \emph{leaf}.

Let $t$ be a tree and let $x$ be a node, we refer $t_x$ to $t(x)$ for simplicity. A subset of words $L \subset \Sigma^*$ is called \emph{prefix-closed} if each prefix of $L$ belongs to $L$. A function $u$ defined on a finite prefix-closed subset $L$ with codomain $\mathcal{A}$ is called a \emph{pattern} or \emph{block}, and $L$ is called the \emph{support} of the pattern. Suppose that $n$ is a nonnegative integer. Let $\Sigma_n = \bigcup_{k = 0}^n \Sigma^k$ denote the set of words of length at most $n$. We say that a pattern $u$ is \emph{a block of height $n$} (or \emph{an $n$-block}) if the support of $u$ is $\Sigma_{n-1}$, denoted by $\mathrm{height}(u) = n$.

Let $\mathcal{T} = \mathcal{A}^{\Sigma^*}$ and $\mathcal{T}_n = \mathcal{A}^{\Sigma_{n-1}}$ be the sets of infinite trees and $n$-blocks over $\mathcal{A}$, respectively, where $n \in \mathbb{N}$. $T_f: \mathcal{T} \to \mathcal{T}$ is called a cellular automaton defined on Cayley tree (TCA) $\mathcal{T}$ with local rule $f$ if there exists $k \in \mathbb{N}$, an ordered set $\mathcal{N} = \{y_1, \ldots, y_k\} \subset  \Sigma^*$, and a local map $f: \mathcal{A}^k \to \mathcal{A}$ such that $(T_f t)_x = f(t_{x \mathcal{N}})$ for all $x \in \Sigma^*$, where $x \mathcal{N} = \{xy_1, \ldots, xy_k\}$ and $f(t_{x \mathcal{N}}) = f(xy_1, \ldots, xy_k)$; $T_f$ is called linear if $f$ is linear.

This elucidation focuses on those linear TCA $T_f$ whose local rule $f: \mathcal{A}^3 \rightarrow \mathcal{A}$ is given by $f (x_0, x_1, x_2) = b x_0 + c_1 x_1 + c_2 x_2 \pmod{m}$; in other words, $T_f: \mathcal{T} \rightarrow \mathcal{T}$ is defined as
$$
(T_ft)_x = f(t_x,t_{x1},t_{x2}) = b t_x + c_1 t_{x1} + c_2 t_{x2}  \pmod{m}
$$
for $x\in \Sigma^*$. A TCA with periodic boundary condition (PBC) is a cellular automaton defined on a finite tree such that the ``children'' of leaves is the root. More explicitly, a linear TCA with PBC is $T_f: \mathcal{T}_n \to \mathcal{T}_n$ defined as
\begin{equation}
(T_ft)_x =
\left\{
\begin{array}{ll}
f(t_x,t_{x1},t_{x2}), & |x|\leq n-2; \\
f(t_x,t_{\epsilon},t_{\epsilon}), & |x|=n-1,
\end{array}
\right.
\end{equation}
for some $n \geq 2$, where $|x|$ indicates the length of $x$.

To investigate the reversibility of linear TCA with PBC, we transfer the global map $T_f$ into matrix operation. Let $\beta$ be an ordered set obtained by rearranging $\Sigma_{n-1}$ with respect to the lexicographical order. For each $t \in \mathcal{T}_n$, denote by $[t]_{\beta} \in \mathcal{A}^{2^{n}-1}$ the column vector of $t$ with respect to $\beta$; more precisely, let $\Theta: \Sigma_{n-1} \rightarrow \mathbb{N}$ be defined as
\begin{equation}
\Theta(x)=
\left\{
\begin{array}{ll}
1, & x=\epsilon; \\
1+\sum^{k}_{i=1} x_{i} 2^{k-i}, & x = x_1 \cdots x_k \in \Sigma^{k};
\end{array}
\right.
\end{equation}
then $[t]_{\beta}(i)=t_x$ for $x \in \Sigma_n$ with $\Theta(x)=i$.
Let
\begin{equation}
T_n (i, j) =
\left\{
\begin{array}{ll}
b, & j=i, 1\leq i \leq 2^{n}-1; \\
c_1, & j=2i, 1\leq i\leq 2^{n-1}-1; \\
c_2, & j=2i+1, 1\leq i\leq 2^{n-1}-1; \\
c_1+c_2, & j=1, 2^{n-1}\leq i\leq 2^{n}-1; \\
0, & \mbox{otherwise.}
\end{array}
\right.
\end{equation}
Evidently, $[T_f t]_{\beta} = T_n [t]_{\beta} \pmod{m}$, and we call $T_n$ the \emph{matrix presentation} of the linear TCA $T_f$ defined on $\mathcal{T}_n$ with PBC.

\begin{example}
Consider the case where $n = 3$, it can be verified without difficulty that
\[
T_3 =
\left(
\begin{array}{ccccccc}
  b & c_1 & c_2 & 0 & 0 & 0 & 0 \\
  0 & b & 0 & c_1 & c_2 & 0 & 0 \\
  0 & 0 & b & 0 & 0 & c_1 & c_2 \\
c_1+c_2 & 0 & 0 & b & 0 & 0 & 0 \\
c_1+c_2 & 0 & 0 & 0 & b & 0 & 0 \\
c_1+c_2 & 0 & 0 & 0 & 0 & b & 0 \\
c_1+c_2 & 0 & 0 & 0 & 0 & 0 & b
\end{array}
\right)
\]
and
\[
[T_ft]_{\beta} =
\begin{pmatrix}
  b & c_1 & c_2 & 0 & 0 & 0 & 0 \\
  0 & b & 0 & c_1 & c_2 & 0 & 0 \\
  0 & 0 & b & 0 & 0 & c_1 & c_2 \\
c_1+c_2 & 0 & 0 & b & 0 & 0 & 0 \\
c_1+c_2 & 0 & 0 & 0 & b & 0 & 0 \\
c_1+c_2 & 0 & 0 & 0 & 0 & b & 0 \\
c_1+c_2 & 0 & 0 & 0 & 0 & 0 & b
\end{pmatrix}
\begin{pmatrix}
t_{\epsilon} \\
t_1 \\
t_2 \\
t_{11} \\
t_{12} \\
t_{21} \\
t_{22}
\end{pmatrix}
\]
for all $t \in \mathcal{T}_3$.
\end{example}

The following proposition follows immediately from the definition of $[\cdot]_{\beta}$, and the proof is omitted.

\begin{proposition}\label{prop:reversible-matrix-determinant}
The following are equivalent.
\begin{enumerate}[\bf (i)]
  \item TCA $T_f$ with PBC is reversible.
  \item $T_n$ is invertible over $\mathbb{Z}_m$.
  \item $\det T_n \neq 0 \pmod{m}$.
\end{enumerate}
\end{proposition}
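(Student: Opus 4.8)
The plan is to read off all three equivalences from the identity $[T_ft]_\beta = T_n[t]_\beta \pmod m$ recorded above, which realizes $T_f$, after the change of coordinates $[\cdot]_\beta$, as the $\mathbb{Z}_m$-linear map $L\colon \mathbb{Z}_m^{2^n-1}\to\mathbb{Z}_m^{2^n-1}$, $L(v)=T_nv \pmod m$.

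First I would check that $[\cdot]_\beta$ is a bijection from $\mathcal{T}_n$ onto $\mathbb{Z}_m^{2^n-1}$. This is immediate once one observes that $\Theta$ is a bijection from $\Sigma_{n-1}$ onto $\{1,\dots,2^n-1\}$: on $\Sigma^k$ the substitution $y_i=x_i-1\in\{0,1\}$ turns $\Theta$ into $x\mapsto 2^k+\sum_{i=1}^{k}y_i2^{k-i}$, an injection onto $\{2^k,\dots,2^{k+1}-1\}$, and as $k$ runs over $0,\dots,n-1$ these intervals partition $\{1,\dots,2^n-1\}$, consistently with $|\Sigma_{n-1}|=\sum_{k=0}^{n-1}2^k=2^n-1$. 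Hence $T_f=([\cdot]_\beta)^{-1}\circ L\circ[\cdot]_\beta$, so $T_f$ is a bijection of $\mathcal{T}_n$ exactly when $L$ is a bijection of $\mathbb{Z}_m^{2^n-1}$, which settles (i)$\Leftrightarrow$(ii) provided one knows $L$ bijective is the same as $T_n$ invertible over $\mathbb{Z}_m$. The forward direction of that is clear (an inverse matrix induces an inverse of $L$); for the converse, surjectivity of $L$ produces $s_i$ with $T_ns_i=e_i$ for each standard basis vector, so the matrix $S$ whose $i$-th column is $s_i$ satisfies $T_nS=I$, and since these are square matrices over the commutative ring $\mathbb{Z}_m$ this one-sided inverse is automatically two-sided.

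For (ii)$\Leftrightarrow$(iii) I would invoke the standard fact that a square matrix $A$ over a commutative ring $R$ is invertible precisely when $\det A$ is a unit of $R$: one direction is $\det A\cdot\det(A^{-1})=1$, the other is the adjugate identity $A\cdot\mathrm{adj}(A)=(\det A)I$. Over $R=\mathbb{Z}_m$ the units are the residues coprime to $m$, which, when $m$ is prime — the case underlying the explicit criteria over $\mathbb{Z}_2$ and $\mathbb{Z}_3$ below — is exactly the condition $\det T_n\not\equiv 0\pmod m$ in (iii). There is no genuinely hard step here; the only point demanding care is the ring-theoretic bookkeeping over $\mathbb{Z}_m$, namely that a one-sided inverse of a square matrix over a commutative ring is two-sided and that ``$\det T_n$ a unit'' (rather than merely nonzero) is the invertibility criterion for composite $m$. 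Everything else is a transcription through the bijection $[\cdot]_\beta$.
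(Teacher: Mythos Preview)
Your argument is correct. The paper omits the proof entirely, asserting only that it ``follows immediately from the definition of $[\cdot]_\beta$''; your write-up supplies exactly the transcription through $[\cdot]_\beta$ the paper has in mind, together with the standard commutative-ring facts (one-sided inverses of square matrices are two-sided, and invertibility is equivalent to the determinant being a unit via the adjugate). Your caveat about composite $m$ is well taken and in fact sharper than the paper: as literally stated, (ii)$\Leftrightarrow$(iii) requires $m$ prime (so that nonzero residues coincide with units), whereas for general $m$ the correct condition in (iii) is $\gcd(\det T_n, m)=1$. The paper's explicit criteria in Section~3 all take $m=p$ prime, so the slip is harmless there, but you are right to flag it.
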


Proposition \ref{prop:reversible-matrix-determinant} indicates that, to determine whether $T_f$ is reversible, it is equivalent to elucidate if $\det T_n \neq 0 \pmod{m}$. Without loss of generality, we may assume that $b \neq 0$; Theorem \ref{thm:formula-Tn} obtains the explicit formula of $\det T_n$.

\begin{theorem}\label{thm:formula-Tn}
Let $T_n$ be the matrix presentation of the linear TCA $T_f$ defined on $\mathcal{T}_n$ with PBC. Then
\begin{equation}
\det T_n=(-1)^{2^n-n-1}(b+c_1+c_2)b^{2^n-n-1}\left[\sum^{n-1}_{j=0}(-1)^{j}(c_1+c_2)^{n-1-j}b^{j}\right]
\end{equation}
\end{theorem}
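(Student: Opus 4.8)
The plan is to embed $T_n$ in a one–parameter family of matrices, derive a size–halving recursion for the determinant by a Schur–complement computation, iterate it down to a $1\times1$ matrix, and finally match the resulting expression with the stated closed form.

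\medskip

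\noindent\emph{Step 1: a one–parameter family and its recursion.} For $k\ge 1$ and a parameter $\alpha$, let $M_k(\alpha)$ be the $(2^k-1)\times(2^k-1)$ matrix defined exactly as $T_n$ in (3) but with the leaf–to–root feedback entry $c_1+c_2$ replaced by $\alpha$; thus $T_n=M_n(c_1+c_2)$ and $M_1(\alpha)=(b+\alpha)$. Under the indexing $\Theta$ one checks that all internal nodes (words of length $\le k-2$) receive indices $1,\dots,2^{k-1}-1$ and all leaves (length $k-1$) receive indices $2^{k-1},\dots,2^{k}-1$, and that the child relations are $\Theta(x1)=2\Theta(x)$, $\Theta(x2)=2\Theta(x)+1$. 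Partition $M_k(\alpha)$ accordingly into a $2\times2$ block matrix with blocks $P$ (internal $\times$ internal), $Q$ (internal $\times$ leaf), $R$ (leaf $\times$ internal), $S$ (leaf $\times$ leaf). Then $S=bI_{2^{k-1}}$, which is invertible since $b\neq 0$; the rows of $Q$ are $c_1e_{2i}^{T}+c_2e_{2i+1}^{T}$ for $i$ ranging over the last level $2^{k-2}\le i\le 2^{k-1}-1$ of internal nodes and zero otherwise; and $R$ has $\alpha$ in its first column and zeros elsewhere. Hence $QR$ has $(c_1+c_2)\alpha$ in the first column of each row indexed by $2^{k-2}\le i\le 2^{k-1}-1$ and is zero otherwise, so the Schur complement $P-\tfrac1b QR$ is precisely $M_{k-1}\!\bigl(-\tfrac{(c_1+c_2)\alpha}{b}\bigr)$. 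Using $\det\!\bigl(\begin{smallmatrix}P&Q\\R&S\end{smallmatrix}\bigr)=\det S\cdot\det(P-QS^{-1}R)$ this gives
\[
\det M_k(\alpha)=b^{2^{k-1}}\det M_{k-1}\!\Bigl(-\tfrac{(c_1+c_2)\alpha}{b}\Bigr),\qquad k\ge 2.
\]

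\medskip

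\noindent\emph{Step 2: iteration.} Applying the recursion $n-1$ times to $M_n(c_1+c_2)$, the accumulated power of $b$ is $b^{2^{n-1}+2^{n-2}+\cdots+2}=b^{2^{n}-2}$ and the parameter after $j$ steps is $(-1)^j(c_1+c_2)^{j+1}b^{-j}$, so one reaches $M_1\!\bigl((-1)^{n-1}(c_1+c_2)^{n}b^{-(n-1)}\bigr)$, whose determinant is $b+(-1)^{n-1}(c_1+c_2)^{n}b^{-(n-1)}$. Multiplying out,
\[
\det T_n=b^{2^{n}-1}+(-1)^{n-1}b^{2^{n}-n-1}(c_1+c_2)^{n}.
\]
(Equivalently one can prove $\det M_k(\alpha)=b^{2^{k}-1}+(-1)^{k-1}b^{2^{k}-k-1}(c_1+c_2)^{k-1}\alpha$ by induction on $k$ using the recursion, then set $k=n$, $\alpha=c_1+c_2$.)

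\medskip

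\noindent\emph{Step 3: matching the stated form.} Writing $c=c_1+c_2$, the bracketed sum is a finite geometric–type series, $\sum_{j=0}^{n-1}(-1)^{j}c^{\,n-1-j}b^{j}=\dfrac{c^{n}-(-b)^{n}}{c+b}$, hence $(b+c)\sum_{j=0}^{n-1}(-1)^{j}c^{\,n-1-j}b^{j}=c^{n}+(-1)^{n-1}b^{n}$. Multiplying by $(-1)^{2^{n}-n-1}b^{2^{n}-n-1}$ and using that $2^{n}$ is even — so $(-1)^{2^{n}-n-1}=(-1)^{n-1}$ and $(-1)^{2^{n}-2}=1$ — returns exactly $b^{2^{n}-1}+(-1)^{n-1}b^{2^{n}-n-1}c^{n}$, which agrees with the expression of Step 2. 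This completes the plan.

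\medskip

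The only substantive step is the Schur–complement reduction in Step 1: the work is to set up the block decomposition correctly under the indexing $\Theta$ and to verify that the Schur complement is again a matrix of the same shape $M_{k-1}(\cdot)$. This is precisely what tames the exponential growth of the dimension, turning it into a single–term recursion; everything afterward is routine bookkeeping of exponents and signs.
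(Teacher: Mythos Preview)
Your argument is correct, and it is genuinely different from the paper's proof. The paper works by explicit Gaussian elimination: Lemma~4.1 adds all columns to the first to pull out the factor $(b+c_1+c_2)$ and then peels off the $2^{n-1}$ leaf rows, landing on an auxiliary $(2^{n-1}-1)\times(2^{n-1}-1)$ matrix $\overline{T}_n$; Lemmas~4.2--4.3 then perform level-by-level row eliminations, tracking a sequence of constants $\alpha_k$ satisfying a linear recurrence, until only a $3\times3$ block remains. Your approach replaces all of this by a single structural observation: the block decomposition into internal nodes versus leaves makes the Schur complement of the leaf block $S=bI$ literally equal to the matrix $M_{k-1}$ of the same family with a new feedback parameter, giving the clean recursion $\det M_k(\alpha)=b^{2^{k-1}}\det M_{k-1}\bigl(-(c_1+c_2)\alpha/b\bigr)$. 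What you gain is a short, self-similar reduction and, as a by-product, the simpler closed form $\det T_n=b^{2^n-1}+(-1)^{n-1}(c_1+c_2)^{n}b^{2^n-n-1}=b^{2^n-n-1}\bigl(b^n-(-(c_1+c_2))^n\bigr)$, from which the paper's factored expression follows by the telescoping identity in your Step~3. What the paper's route buys is that the factor $(b+c_1+c_2)$ is visible from the first elimination step rather than recovered algebraically at the end, which is convenient for the reversibility criteria in Section~3. Both arguments rely on $b\neq 0$ (you need $S$ invertible; the paper divides by $b$ in its row operations), consistent with the standing assumption made before the theorem.
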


The proof of Theorem \ref{thm:formula-Tn} is postponed to Section \ref{sec:proof-main-theorem}. The following example reveals the main idea of the proof.

\begin{example}
For the case where $n = 4$, it is seen that
\begin{align*}
\det T_4 &= \det
\left(
  \begin{array}{ccccccccccccccc}
      b & c_1 & c_2 &   0 &   0 &   0 &   0 & 0 & 0 & 0& 0& 0& 0& 0& 0\\
      0 &   b &   0 & c_1 & c_2 &   0 &   0 & 0 & 0 & 0& 0& 0& 0& 0& 0\\
      0 &   0 &   b &   0 &   0 & c_1 & c_2 & 0 & 0 & 0& 0& 0& 0& 0& 0\\
0 &   0 &   0 &   b &   0 &   0 &   0 & c_1 & c_2 & 0& 0& 0& 0& 0& 0\\
0 &   0 &   0 &   0 &   b &   0 &   0 & 0 & 0& c_1& c_2& 0& 0& 0& 0\\
0 &   0 &   0 &   0 &   0 &   b &   0 & 0 & 0 & 0& 0& c_1& c_2& 0& 0\\
0 &   0 &   0 &   0 &   0 &   0 &   b & 0 & 0 & 0& 0& 0& 0& c_1& c_2\\
c_1+c_2 &   0 &   0 &   0 &   0 &   0 &   0 & b & 0 & 0& 0& 0& 0& 0& 0\\
c_1+c_2 &   0 &   0 &   0 &   0 &   0 &   0 & 0 & b & 0& 0& 0& 0& 0& 0\\
c_1+c_2 &   0 &   0 &   0 &   0 &   0 &   0 & 0 & 0 & b& 0& 0& 0& 0& 0\\
c_1+c_2 &   0 &   0 &   0 &   0 &   0 &   0 & 0 & 0 & 0& b& 0& 0& 0& 0\\
c_1+c_2 &   0 &   0 &   0 &   0 &   0 &   0 & 0 & 0 & 0& 0& b& 0& 0& 0\\
c_1+c_2 &   0 &   0 &   0 &   0 &   0 &   0 & 0 & 0 & 0& 0& 0& b& 0& 0\\
c_1+c_2 &   0 &   0 &   0 &   0 &   0 &   0 & 0 & 0 & 0& 0& 0& 0& b& 0\\
c_1+c_2 &   0 &   0 &   0 &   0 &   0 &   0 & 0 & 0 & 0& 0& 0& 0& 0& b\\
  \end{array}
\right) \\
&= (b+c_1+c_2)\det
\left(
  \begin{array}{ccccccccccccccc}
0 & c_1 & c_2 &   0 &   0 &   0 &   0 &   -b & 0 & 0& 0& 0& 0& 0& 0\\
0 &   b &   0 & c_1 & c_2 &   0 &   0 &   -b & 0 & 0& 0& 0& 0& 0& 0\\
0 &   0 &   b &   0 &   0 & c_1 & c_2 &   -b & 0 & 0& 0& 0& 0& 0& 0\\
0 &   0 &   0 &   b &   0 &   0 &   0 & c_1-b & c_2 & 0& 0& 0& 0& 0& 0\\
0 &   0 &   0 &   0 &   b &   0 &   0 &   -b & 0& c_1& c_2& 0& 0& 0& 0\\
0 &   0 &   0 &   0 &   0 &   b &   0 &   -b & 0 & 0& 0& c_1& c_2& 0& 0\\
0 &   0 &   0 &   0 &   0 &   0 &   b &   -b & 0 & 0& 0& 0& 0& c_1& c_2\\
1 &   0 &   0 &   0 &   0 &   0 &   0 &   b & 0 & 0& 0& 0& 0& 0& 0\\
0 &   0 &   0 &   0 &   0 &   0 &   0 &   -b & b & 0& 0& 0& 0& 0& 0\\
0 &   0 &   0 &   0 &   0 &   0 &   0 &   -b & 0 & b& 0& 0& 0& 0& 0\\
0 &   0 &   0 &   0 &   0 &   0 &   0 &   -b & 0 & 0& b& 0& 0& 0& 0\\
0 &   0 &   0 &   0 &   0 &   0 &   0 &   -b & 0 & 0& 0& b& 0& 0& 0\\
0 &   0 &   0 &   0 &   0 &   0 &   0 &   -b & 0 & 0& 0& 0& b& 0& 0\\
0 &   0 &   0 &   0 &   0 &   0 &   0 &   -b & 0 & 0& 0& 0& 0& b& 0\\
0 &   0 &   0 &   0 &   0 &   0 &   0 &   -b & 0 & 0& 0& 0& 0& 0& b\\
  \end{array}
\right) \\
 &= -(b+c_1+c_2)b^{7}\det
\left(
  \begin{array}{cccccccc}
  c_1 & c_2 &   0 &   0 &   0 &   0 & -b \\
    b &   0 & c_1 & c_2 &   0 &   0 & -b \\
    0 &   b &   0 &   0 & c_1 & c_2 & -b \\
    0 &   0 &   b &   0 &   0 &   0 & c_1+c_2-b \\
    0 &   0 &   0 &   b &   0 &   0 & c_1+c_2-b \\
    0 &   0 &   0 &   0 &   b &   0 & c_1+c_2-b \\
    0 &   0 &   0 &   0 &   0 &   b & c_1+c_2-b
  \end{array}
\right) \\
 &= (b+c_1+c_2)(-b)^{7}\det
\left(
  \begin{array}{cccccccc}
  c_1 & c_2 &   0 &   0 &   0 &   0 & -b \\
    b &   0 &   0 &   0 &   0 &   0 & -\dfrac{(c_1 + c_2)(c_1 + c_2 - b) + b^2}{b} \\
    0 &   b &   0 &   0 &   0 &   0 & -\dfrac{(c_1 + c_2)(c_1 + c_2 - b) + b^2}{b} \\
    0 &   0 &   b &   0 &   0 &   0 & c_1+c_2-b \\
    0 &   0 &   0 &   b &   0 &   0 & c_1+c_2-b \\
    0 &   0 &   0 &   0 &   b &   0 & c_1+c_2-b \\
    0 &   0 &   0 &   0 &   0 &   b & c_1+c_2-b
  \end{array}
\right) \\
&= (b+c_1+c_2)(-b)^{11}\det
\left(
  \begin{array}{ccc}
 c_1 & c_2 & -b \\
  b &   0 & -\dfrac{(c_1 + c_2)(c_1 + c_2 - b) + b^2}{b} \\
  0 &   b & -\dfrac{(c_1 + c_2)(c_1 + c_2 - b) + b^2}{b}
  \end{array}
\right)\\
&= - (b+c_1+c_2)b^{11}\left(\sum_{j=0}^3 (-1)^j (c_1 + c_2)^{3-j} b^j\right)
\end{align*}
\end{example}

\section{Necessary and Sufficient Condition for the Reversibility}

This section addresses the full criteria for the reversibility of linear TCA over $\mathbb{Z}_2$, $\mathbb{Z}_3$, and some other case. The criteria herein focus on the binary tree for the simplicity and can extend to general Cayley trees without difficulty.

Proposition \ref{prop:reversible-matrix-determinant} and Theorem \ref{thm:formula-Tn} reveal that the necessary and sufficient condition for a TCA $T_f$ being reversible is
$$
(b+c_1+c_2)b^{2^n-n-1}\left[\sum^{n-1}_{j=0}(-1)^{j}(c_1+c_2)^{n-1-j}b^{j}\right] \neq 0 \pmod{m}.
$$
Suppose that $m=p$ is a prime. The necessary and sufficient condition for the reversibility of $T_f$ can be simplified as follows: $T_f$ is reversible if and only if
\begin{enumerate}[\bf (i)]
\item $b \neq 0 \pmod{p}$;
\item $b + c_1 + c_2 \neq 0 \pmod{p}$;
\item $\sum^{n-1}_{j=0}(-1)^{j}(c_1+c_2)^{n-1-j}b^{j} \neq 0 \pmod{p}$.
\end{enumerate}
To ease the notation, all the evaluation are considered under modulo $p$ in this section unless otherwise stated.

Let $g(x) = \sum^{n-1}_{k=0} (-1)^{k} b^{k} x^{n-1-k}$; elaborating the reversibility of $T_f$ is ``almost'' equivalent to the discussion of the existence of the roots of $g(x)$. This section investigates the cases where $p = 2$ and $p = 3$; for each case, the reversibility of $T_f$ is characterized explicitly. For this purpose, we assume that $b \neq 0$ in the rest of this section.

Furthermore, the reversibility of $T_f$ for the case where $n = 2^l$ for some $l \in \mathbb{N}$ is elucidated; some interesting algebraic property is revealed.

\subsection{Cases Study: $p=2$}

\begin{proposition}
$T_f$ is reversible if and only if $b=1$ and $c_1+c_2=0$.
\end{proposition}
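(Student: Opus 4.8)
The plan is to read off the claim from the prime-modulus criterion displayed just above, specialized to $p = 2$. Recall that, with $b \neq 0$ assumed, $T_f$ is reversible over $\mathbb{Z}_2$ if and only if (i) $b \neq 0$, (ii) $b + c_1 + c_2 \neq 0$, and (iii) $\sum_{j=0}^{n-1}(-1)^j(c_1+c_2)^{n-1-j}b^j \neq 0$, all modulo $2$. Since the only invertible residue modulo $2$ is $1$, condition (i) is equivalent to $b = 1$ (equivalently, this is just the standing assumption of the section), which already supplies one half of the asserted characterization; from here on we take $b = 1$.

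With $b = 1$, condition (ii) becomes $1 + c_1 + c_2 \neq 0 \pmod 2$, i.e.\ $c_1 + c_2 \equiv 0 \pmod 2$, which is the remaining half of the claim. It only remains to verify that condition (iii) is then automatic, so that (i) and (ii) alone govern reversibility. For this, note that when $b = 1$ the sum in (iii) equals $\sum_{k=0}^{n-1}(-1)^k(c_1+c_2)^{n-1-k} = g(c_1+c_2)$; over $\mathbb{Z}_2$, if $c_1 + c_2 \equiv 0$ then every monomial $(c_1+c_2)^{n-1-k}$ vanishes except the one with $k = n-1$, and the sum collapses to $(-1)^{n-1} \equiv 1 \neq 0 \pmod 2$. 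Hence whenever (i) and (ii) hold, (iii) holds as well; and if $c_1 + c_2 \not\equiv 0$, then (ii) already fails. Combining, $T_f$ is reversible if and only if $b = 1$ and $c_1 + c_2 = 0$.

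There is no real obstacle here: the proposition is an immediate corollary of the prime-modulus criterion, and the only step worth spelling out is the degeneration of the sum in (iii) to a single surviving term in characteristic $2$, which makes that condition redundant. If one prefers to avoid the standing assumption $b \neq 0$, the case $b = 0$ is disposed of directly: then the $2^{n-1} \geq 2$ rows of $T_n$ indexed by the leaves all equal $(c_1+c_2, 0, \ldots, 0)$, so $\det T_n = 0$ and $T_f$ is not reversible, consistent with the claim. One could equally run the whole argument on the determinant formula of Theorem \ref{thm:formula-Tn} reduced modulo $2$.
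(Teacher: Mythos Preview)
Your proof is correct and follows essentially the same route as the paper: specialize the prime-modulus criterion to $p=2$, force $b=1$, and reduce everything to $c_1+c_2\equiv 0$. The only cosmetic difference is that the paper verifies condition~(iii) by finding the root of $g(x)=1+x+\cdots+x^{n-1}$ over $\mathbb{Z}_2$ via a parity split on $n$, whereas you simply plug $c_1+c_2=0$ into the sum and watch it collapse to a single nonzero term; your version is a bit cleaner and avoids the case analysis.
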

\begin{proof}
Notably, we only need to consider the case where $b = 1$ when $p = 2$ since $b \neq 0$. It is seen that
$$
g(x) = 1 + x + x^2 + \cdots + x^{n-1}.
$$
Suppose that $n$ is even. It is easily seen that $g(x) = 0$ if and only if $x = 1$. That is, $T_f$ is reversible if and only if $c_1 + c_2 = 0$. For the case where $n$ is odd, $g(x)$ can be expressed as
$$
g(x) = (1+x)(1 + x^2 + x^4 + \cdots + x^{n - 2}) = 0 \pmod{2} \quad \Leftrightarrow \quad x = 1
$$
Both cases demonstrate that $T_f$ is reversible if and only if $c_1 + c_2 = 0$. This completes the proof.
\end{proof}

Figure \ref{fig:p2} illustrates a periodic orbit derived by a reversible CA defined on binary tree with height $5$.
\begin{figure}
\begin{center}
\includegraphics[scale=0.5]{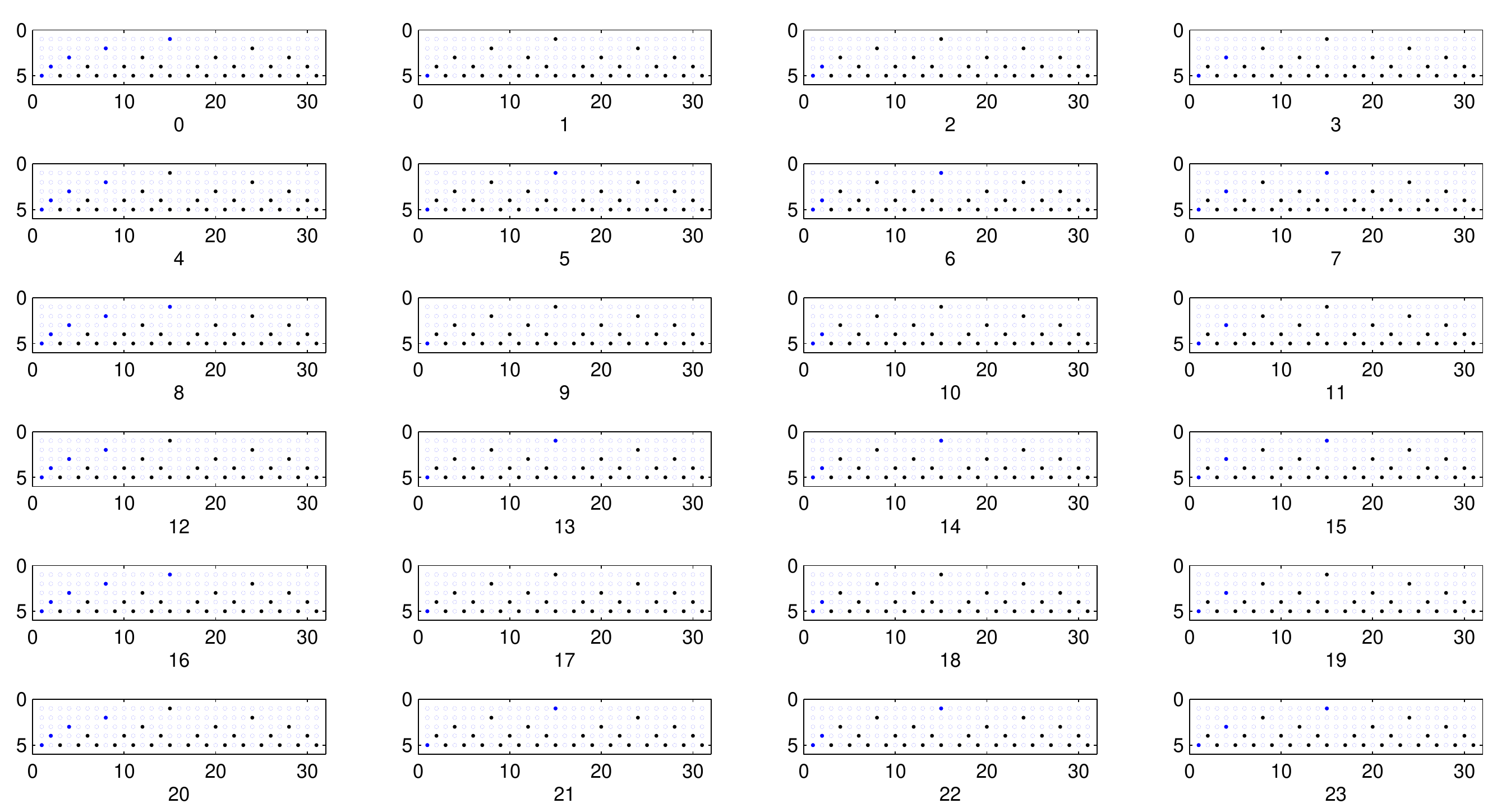}
\end{center}
\caption{A period-$8$ orbit of CA defined on binary tree with height $5$, where the local rule is given by $f(x_0, x_1, x_2) = x_0 + x_1 + x_2 \pmod{2}$. The color black and blue represents $0$ and $1$, respectively.}
\label{fig:p2}
\end{figure}

\subsection{Cases Study: $p=3$}

Recall that $b = 0$ indicates that $T_f$ is irreversible; hence it suffices to consider the case where $b \neq 0$.

\begin{proposition}
For the case where $p=3$, $T_f$ is irreversible if and only if one of the following is satisfied:
\begin{enumerate}[\bf 1)]
\item $c_1+c_2=b$;
\item $c_1+c_2\neq 0$ and $n$ is a multiple of $6$;
\item $b+c_1+c_2=0$.
\end{enumerate}
\end{proposition}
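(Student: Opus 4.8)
The plan is to build on the reduction already in place: since we assume $b \not\equiv 0 \pmod 3$, Proposition~\ref{prop:reversible-matrix-determinant} together with Theorem~\ref{thm:formula-Tn} (and the prime-modulus simplification displayed above) says that $T_f$ is \emph{irreversible} precisely when $b + c_1 + c_2 \equiv 0 \pmod 3$ or $g(c_1 + c_2) \equiv 0 \pmod 3$, where $g(x) = \sum_{k=0}^{n-1} (-1)^k b^k x^{n-1-k}$. Clause 3) is exactly the first alternative, so it remains to show that $g(c_1 + c_2) \equiv 0 \pmod 3$ holds if and only if clause 1) or clause 2) holds (the overlap with clause 3) being harmless, since there $T_f$ is irreversible anyway). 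Hence the whole proposition is equivalent to an explicit, modulo-$3$ description of the zero set of $g$ as a function of $c_1 + c_2$ and $n$.

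The engine of that description is the telescoping identity $(x+b)\,g(x) = x^n - (-b)^n$ in $\mathbb{Z}[x]$, obtained by expanding the geometric progression. Writing $s = c_1 + c_2$, this splits the argument in two. When $s + b \not\equiv 0 \pmod 3$, the factor $x+b$ is a unit at $x=s$, so $g(s) \equiv 0$ iff $s^n \equiv (-b)^n \pmod 3$; using $b \in \{1,2\}$ and $b^2 \equiv 1$ one has $(-b)^n \equiv (-1)^n b^n$, and a one-line check over $s \in \{0,1,2\}$ (where the constraints $s \not\equiv 0$ and $s \not\equiv -b$ leave only $s \equiv b$) turns this into a parity condition on $n$. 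When $s + b \equiv 0 \pmod 3$, i.e.\ $s \equiv -b$, one instead evaluates $g(-b) = n(-b)^{n-1}$ directly, whose vanishing modulo $3$ is controlled by $3 \mid n$; but this case sits inside clause 3), so it only has to be checked for consistency. Combining the two regimes and translating the conditions on $s \bmod 3$, on the parity of $n$, and on $3 \mid n$ back into statements about $c_1, c_2$ and $n$ yields clauses 1)-3); I expect the divisibility by $6$ in clause 2) to emerge from the interplay of the parity obstruction coming from $s^n \equiv (-b)^n$ with the $3\mid n$ obstruction coming from the degenerate value $g(-b)$.

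The arithmetic modulo $3$ is routine; the step I expect to be the real obstacle is the final bookkeeping—verifying that clauses 1), 2), 3), each carrying a different side condition on $n$, together cover exactly the irreversible configurations with no incorrect overlap and no omission, in particular handling the degenerate value $c_1 + c_2 \equiv -b$ consistently between the $g(-b)$ computation and clause 3). I would make this airtight by tabulating, for $b \in \{1,2\}$, each residue of $c_1 + c_2$ modulo $3$, and each residue of $n$ modulo $6$, whether $\det T_n \equiv 0 \pmod 3$ according to Theorem~\ref{thm:formula-Tn}, and then confirming that the irreversible entries are precisely those matched by one of 1), 2), 3).
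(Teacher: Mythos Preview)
Your telescoping identity $(x+b)\,g(x)=x^n-(-b)^n$ is correct and is actually a cleaner device than the paper's, which instead uses $b^2\equiv 1\pmod 3$ to simplify $g$ and then factors out $(x-b)$ by hand in separate even/odd-$n$ cases. But if you carry your own computation through, it does \emph{not} reproduce the proposition, and this is not a gap in your argument---the proposition as stated is false, and your hedged ``I expect the divisibility by $6$ to emerge\ldots'' is precisely where you are bending your analysis to fit an incorrect target.

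Concretely, with $s=c_1+c_2$ and $b\not\equiv 0$, your Case~A ($s\not\equiv -b$) gives $g(s)\equiv 0$ iff $s^n\equiv(-b)^n$. For $s=0$ this never holds; for $s=b$ it holds iff $(-1)^n\equiv 1$, i.e.\ iff $n$ is even. Your Case~B ($s\equiv -b$) is already absorbed by clause~3). Hence $\det T_n\equiv 0\pmod 3$ exactly when $s\equiv -b$, or when $s\equiv b$ and $n$ is even. In particular, for $n$ odd and $c_1+c_2\equiv b$ the automaton is \emph{reversible}: take $n=3$, $b=1$, $c_1+c_2=1$, so that Theorem~\ref{thm:formula-Tn} gives $\det T_3=2\cdot(1-1+1)=2\not\equiv 0\pmod 3$. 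This contradicts clause~1). Clause~2) is in fact redundant---over $\mathbb Z_3$, $s\ne 0$ forces $s\in\{b,-b\}$, so clause~2) is already contained in $1)\cup 3)$---and no ``interplay'' of your parity and $3\mid n$ obstructions will manufacture the condition $6\mid n$.

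The paper's own proof contains the matching error: its claimed factorisation in Case~2 ($n$ odd) is wrong, since $g(b)=b^{n-1}\sum_{k=0}^{n-1}(-1)^k=b^{n-1}\not\equiv 0$ for $n$ odd, so $(x-b)$ does not divide $g(x)$ there. Your planned tabulation over $b\in\{1,2\}$, $s\in\{0,1,2\}$, $n\bmod 2$ is exactly the right safeguard and would have exposed this; the corrected statement is that $T_f$ is irreversible iff $b+c_1+c_2\equiv 0$, or $c_1+c_2\equiv b$ and $n$ is even.
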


\begin{proof}
Since $b \neq 0$, Fermat's Little Theorem asserts that $b^2=1 \pmod{3}$.

\noindent \textbf{Case 1.} $n$ is even.
\begin{align*}
g(x)&=x^{n-1}-bx^{n-2}+b^2x^{n-3}-b^3x^{n-4}+...+b^{n-2}x-b^{n-1}\\
&=(x-b)(x^{n-2}+x^{n-4}+x^{n-6}+...+x^2+1) \pmod{3}
\end{align*}
A straightforward examination shows that
$$
x^{n-2}+x^{n-4}+x^{n-6}+...+x^2+1 = 0 \pmod{3} \quad \Leftrightarrow \quad x \neq 0 \text{ and } n=6l
$$
for some $l \in \mathbb{N}$. Therefore, $g(x) = 0 \pmod{3}$ if and only if $x=b$ or $x \neq 0$ and $n=6l$  for some $l \in \mathbb{N}$.

\noindent \textbf{Case 2.} $n$ is odd. Observe that
\begin{align*}
g(x) &= \left\{
     \begin{array}{ll}
       (x-1)(x^{n-2} + x^{n-4} + \cdots + x^2 + 1), & b=1\hbox{;} \\
       (x+1)(x^{n-2} + x^{n-4} + \cdots + x^2 + 1), & b = 2\hbox{.}
     \end{array}
   \right. \pmod{3}
\end{align*}
Similar to the above discussion, it follows that $g(x) = 0 \pmod{3}$ if and only if $x=b$ or $x \neq 0$ and $n=6l$  for some $l \in \mathbb{N}$.

The proof is complete.
\end{proof}

\subsection{Cases Study: $n=2^l$}

Instead of considering the special cases $p = 2$ and $p = 3$, this subsection focuses on the case where the height of the Cayley tree is some power of $2$. For this case, some algebraic property related to the reversibility of $T_f$ is revealed.

Suppose that $n = 2^l$ for some $l \in \mathbb{N}$. It is seen that
\begin{align*}
g(x)&=(x-b)(x^{n-2}+b^2x^{x-4}+b^4x^{n-6}+...+b^{n-4}x^2+b^{n-2}) \\
&=(x-b)(x^2+b^2)(x^4+b^4) \cdots (x^{2^{l-1}}+b^{2^{l-1}})
\end{align*}
The last equality can be demonstrated via the mathematical induction principle, thus is omitted. The reversibility of $T_f$ is then characterized explicitly as follows.


\begin{proposition}
For the case where $n=2^{l}$ for some $l \in \mathbb{N}$, $T_f$ is irreversible if and only if one of the following is satisfied:
\begin{enumerate}[\bf 1)]
\item $b + c_1 + c_2 = 0$;
\item $c_1+c_2=b$;
\item $p=2^{r}+1$ for some integer $r < l$ and $(c_1 + c_2)^{2^{q}}+b^{2^{q}}=0 \pmod{p}$ for some $1 \leq q \leq r-1$;
\item $p \neq 2^r + 1$ for $r \geq 0$ and $(c_1 + c_2)^{2^{q}}+b^{2^{q}}=0 \pmod{p}$ for some $1 \leq q \leq \gamma$, where $\gamma$ is the order of the subset $A = \{2^k: k \in \mathbb{N}\}$ of $\mathbb{Z}_p$.
\end{enumerate}
\end{proposition}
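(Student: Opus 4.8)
The plan is to read off everything from Proposition~\ref{prop:reversible-matrix-determinant}, Theorem~\ref{thm:formula-Tn}, and the factorization $g(x)=(x-b)\prod_{q=1}^{l-1}\bigl(x^{2^{q}}+b^{2^{q}}\bigr)$ recorded just above the statement. Since $b\neq 0$ is assumed, the criterion from Section~3 says $T_f$ is irreversible precisely when $b+c_1+c_2\equiv 0\pmod p$ (this is alternative~1)) or $g(c_1+c_2)\equiv 0\pmod p$. Because $\mathbb{Z}_p$ is a field, the latter holds iff $c_1+c_2\equiv b$ (alternative~2)) or $(c_1+c_2)^{2^{q}}+b^{2^{q}}\equiv 0\pmod p$ for some $1\le q\le l-1$. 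Thus the whole content of the proposition is the assertion that this last condition is equivalent to ``alternative~3) or alternative~4)'', depending on the shape of $p$. First I would dispose of the degenerate subcase: if $c_1+c_2\equiv 0$ then $(c_1+c_2)^{2^{q}}+b^{2^{q}}\equiv b^{2^{q}}\not\equiv 0$, so the condition fails, consistently with 3) and 4), both of which demand that same vanishing.

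Assuming now $c_1+c_2\not\equiv 0$, set $y=(c_1+c_2)b^{-1}\in\mathbb{Z}_p^{\times}$ (legitimate since $b\neq 0$); then $(c_1+c_2)^{2^{q}}+b^{2^{q}}\equiv 0$ becomes $y^{2^{q}}=-1$ in the cyclic group $\mathbb{Z}_p^{\times}$ of order $p-1$. The key elementary step is to prove: \emph{$y^{2^{q}}=-1$ holds for some $q\ge 1$ if and only if the multiplicative order $d$ of $y$ equals $2^{a}$ for some $a\ge 2$, in which case $q$ is uniquely determined by $q=a-1$.} Indeed, $-1$ is the unique element of order $2$ in $\mathbb{Z}_p^{\times}$, and $y^{2^{q}}$ has order $d/\gcd(d,2^{q})$; forcing this to equal $2$ makes $d$ a power of $2$, say $d=2^{a}$, and then forces $\min(a,q)=a-1$, i.e. $q=a-1$; the converse is immediate. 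In particular such $y$ exist only when $2^{a}\mid p-1$.

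Next I would split on the $2$-adic shape of $p-1$. If $p=2^{r}+1$, so $p-1$ is a pure power of $2$, then $2^{a}\mid p-1\iff a\le r$; combining with $a\ge 2$, with $a\le l$ (needed for the factor $x^{2^{a-1}}+b^{2^{a-1}}$ to occur in $g$), and with $q=a-1$, the admissible range of $q$ is $1\le q\le\min(r,l)-1$, which under the stated hypothesis $r<l$ is exactly $1\le q\le r-1$; this gives alternative~3). If $p$ is not of the form $2^{r}+1$, write $p-1=2^{s}t$ with $t>1$ odd; the equation $y^{2^{q}}=-1$ still requires $d$ to be a power of $2$ dividing $2^{s}$, hence is unsolvable once $q\ge s$, while on the other hand I would use the dynamics of the doubling map $z\mapsto z^{2}$ on $\mathbb{Z}_p^{\times}$ — equivalently, the eventual periodicity of the exponent sequence $q\mapsto 2^{q}\bmod(p-1)$, governed by the subgroup $A=\{2^{k}:k\in\mathbb{N}\}\le\mathbb{Z}_p^{\times}$ of order $\gamma$ — to conclude that scanning $1\le q\le\gamma$ already reaches every index at which the vanishing can occur; this yields alternative~4).

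The step I expect to be the main obstacle is exactly this last index bookkeeping: showing that the bound on $q$ furnished by $\gamma$ (respectively $r$) is the correct one — neither missing a genuine root nor counting a ``root'' produced by an exponent $2^{q}$ that does not actually appear in $g$ because $q>l-1$. This requires tracking $l$, the exponent $s=v_2(p-1)$, and $\gamma$ simultaneously, and treating the boundary configurations — such as $r\ge l$, $l\le s$, and the very small primes $p=2,3$ — separately, in order to confirm that alternatives 1)--4) are jointly exhaustive. I would therefore isolate the case analysis into a single lemma describing exactly which powers $2^{q}$ can send an element $y$ to $-1$ inside $\mathbb{Z}_p^{\times}$, after which assembling 1)--4) is routine.
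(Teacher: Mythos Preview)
Your plan follows the paper's proof closely: both start from the factorization $g(x)=(x-b)\prod_{q=1}^{l-1}(x^{2^q}+b^{2^q})$, dispose of $x=0$, and then split on whether $p$ is a Fermat prime. For alternative~3) the paper does exactly what you outline --- when $p=2^{r}+1$ with $r<l$, Fermat's little theorem gives $a^{2^{k}}\equiv 1$ for $k\ge r$ and $a\not\equiv 0$, so the factors with $q\ge r$ collapse to the constant~$2$, leaving only $1\le q\le r-1$. For alternative~4) the paper simply invokes the eventual periodicity of $\{2^{k}\}$ to collect repeated factors and declares $\gamma=|A|$ to be the correct range; it does not pass through your sharper characterization via $\mathrm{ord}(y)=2^{a}$ and $q=a-1$. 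Your version is therefore more careful than the paper's: you explicitly flag the boundary configurations ($r\ge l$, small primes, $l$ versus $v_{2}(p-1)$) that the paper's statement and proof silently ignore, and your order-of-$y$ lemma gives a genuine reason why only finitely many $q$ need to be checked, whereas the paper just asserts it.

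One small correction: you write $A=\{2^{k}:k\in\mathbb{N}\}\le\mathbb{Z}_{p}^{\times}$, but the exponent sequence controlling $y^{2^{q}}$ lives modulo $p-1$ (indeed modulo $\mathrm{ord}(y)$), not in $\mathbb{Z}_{p}^{\times}$. The paper's own phrasing ``subset $A$ of $\mathbb{Z}_{p}$'' is equally imprecise; its worked examples ($p=7$, $p=29$) show that what is really meant is the set of distinct residues $2^{k}\bmod(p-1)$. Keep your reduction $q\mapsto 2^{q}\bmod(p-1)$ and drop the claim that $A$ is a subgroup of $\mathbb{Z}_{p}^{\times}$.
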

\begin{proof}
Observe that $g(x) \neq 0$ if $x = 0$ and $T_f$ is reversible in this case; therefore, we may assume that $x \neq 0$ without loss of generality. It is obvious that the irreversibility of $T_f$ follows from $c_1 + c_2 = b$.

Suppose that $p=2^{r}+1$ for some $r \in \mathbb{N}$. Fermat's Little Theorem asserts that $a^{2^k} = 1 \pmod{p}$ for all $k \geq r$ provided $a \neq 0$. It follows immediately that
$$
g(x)=2^{l-r}(x-b) (x^2+b^2) (x^4+b^4) \cdots (x^{2^{r-1}}+b^{2^{r-1}}).
$$
Hence, $g(x) = 0$ if and only if $x = b$ or $x^{2^{q}}+b^{2^{q}}=0 \pmod{p}$ for some $1 \leq q \leq r-1$.

Suppose that the prime $p$ cannot expressed as $p = 2^r + 1$ for $r \geq 0$. It can be verified that the sequence $\{2^k\}_{k \in \mathbb{N}}$ is eventually periodic. Let $A = \{2^k: k \geq 1\}$ be an ordered set with $|A| = \gamma$. There exists $\lambda \in \mathbb{N}$ such that
$$
g(x) = (x - b) \left(\prod_{i=1}^{\lambda} (x^{2^i} + b^{2^i})\right) \left(\prod_{i=\lambda+1}^{\tau} (x^{2^i} + b^{2^i})\right)^{\kappa+1} \left(\prod_{i=\tau+1}^{\gamma} (x^{2^i} + b^{2^i})\right)^{\kappa}
$$
for some $\kappa$ and $\tau$. Evidently, $g(x) = 0$ if and only if $x = b$ or $x^{2^{q}}+b^{2^{q}}=0$ for some $1 \leq q \leq \gamma$.

This completes the proof.
\end{proof}

We end this section with the following two examples that illustrate the case where $p \neq 2^r +1$ and $n = 2^l$.

\begin{example}
Suppose that $n = 2^{10}$ and $p = 7$. A routine examination infers that $x^{2^{k}}=x^2 \pmod{7}$ if $k$ is odd and $x^{2^{k}}=x^4 \pmod{7}$ if $k$ is even; that is, $A = \{2, 4\}$. It follows that
$$
g(x) = (x-b)(x^2+b^2)(x^4+b^4) \cdots (x^{2^9}+b^{2^9}) = (x-b)(x^2+b^2)^{5}(x^4+b^4)^{4}.
$$
Therefore, $T_f$ is reversible if and only if
\begin{enumerate}[\bf 1)]
\item $b+c_1+c_2 \neq 0$;
\item $c_1+c_2 \neq b$ and $b \neq 0$;
\item $(c_1+c_2)^2+b^2 \neq 0$;
\item $(c_1+c_2)^4+b^4 \neq 0$.
\end{enumerate}
\end{example}

\begin{example}
Suppose that $n = 2^{10}$ and $p = 29$. A routine examination infers that $x^{2^{k}}=x^4 \pmod{29}$ when $k=3n-1$; $x^{2^{k}}=x^8 \pmod{29}$ when $k=3n$, and $x^{2^{k}}=x^{16} \pmod{29}$ when $k=3n+1$  for some natural number $n \in \mathbb{N}$. In other words,
$$
\{2^k: k \geq 1\} = \{2, 4, 8, 16, 4, 8, 16, \ldots\} \quad \text{and} \quad A = \{2, 4, 8, 16\}.
$$
It follows that
$$
g(x) = (x-b)(x^2+b^2)(x^4+b^4) \cdots (x^{2^9}+b^{2^9}) = (x-b)(x^2+b^2)(x^4+b^4)^{3}(x^8+b^8)^{3}(x^{16}+b^{16})^{2}.
$$
Therefore, $T_f$ is reversible if and only if
\begin{enumerate}[\bf 1)]
\item $b+c_1+c_2 \neq 0$;
\item $c_1+c_2 \neq b$ and $b \neq 0$;
\item $(c_1+c_2)^2+b^2 \neq 0$;
\item $(c_1+c_2)^4+b^4 \neq 0$;
\item $(c_1+c_2)^8+b^8 \neq 0$;
\item $(c_1+c_2)^{16}+b^{16} \neq 0$.
\end{enumerate}

\end{example}

\section{Proof of Theorem \ref{thm:formula-Tn}} \label{sec:proof-main-theorem}

This section contributes to the proof of Theorem \ref{thm:formula-Tn}; we begin with the following lemma.

\begin{lemma} \label{combine21}
For $n\in \mathbb N$, let $\overline{T}_{n} \in \mathcal{M}_{2^{n-1}-1}(\mathbb{R})$ be a $(2^{n-1}-1) \times (2^{n-1}-1)$ matrix defined as
\begin{equation}
\overline{T}_{n}(i,j)=
\left\{
\begin{array}{ll}
       -b, & j=2^{n-1}-1,1\leq{i}\leq{2^{n-2}-1}; \\
c_1+c_2-b, & j=2^{n-1}-1,2^{n-2}\leq{i}\leq{2^{n-1}-1}; \\
        b, & j=i-1,2\leq i\leq 2^{n-1}-1; \\
      c_1, & j=2i-1,1\leq i\leq2^{n-2}-1; \\
      c_2, & j=2i,1\leq i\leq2^{n-2}-1; \\
        0, & \hbox{otherwise.}
\end{array}
\right.
\end{equation}
Then $\det T_n = (-b)^{2^{n-1}-1}(b+c_1+c_2)\det\overline{T}_{n}$.
\end{lemma}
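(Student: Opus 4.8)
The plan is to turn $T_n$ into $\overline T_n$ by the same three moves exhibited in the $n=4$ example: one column operation that extracts the factor $b+c_1+c_2$, one row sweep followed by a cofactor expansion that deletes the root column, and a block elimination that absorbs the leaves. We may assume $b\neq 0$ throughout, since both sides of the claimed identity are polynomials in $b,c_1,c_2$ and agreement on the dense set $\{b\neq 0\}$ forces agreement everywhere. To begin, I note that every row of $T_n$ sums to $b+c_1+c_2$: an interior row $i$ (with $1\le i\le 2^{n-1}-1$) carries $b$ on the diagonal and $c_1,c_2$ in columns $2i,2i+1$, while a leaf row carries $b$ on the diagonal and $c_1+c_2$ in column $1$. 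Hence adding columns $2,\dots,2^{n}-1$ to column $1$ replaces that column by $(b+c_1+c_2)\mathbf 1$ and leaves $\det T_n$ unchanged; factoring the scalar out gives $\det T_n=(b+c_1+c_2)\det M$, where $M$ coincides with $T_n$ away from the first column and has first column $\mathbf 1$.

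Next, let $r=2^{n-1}$, which is $\Theta(\underbrace{1\cdots1}_{n-1})$, the index of the leftmost leaf; row $r$ of $M$ has a $1$ in column $1$, a $b$ in column $r$, and zeros elsewhere. Subtracting row $r$ from each of the other $2^{n}-2$ rows preserves the determinant, turns column $1$ into the standard basis vector $e_r$, subtracts $b$ from the (old) column-$r$ entry of each remaining row, and leaves all other columns untouched. Since row $r$ has no interior entry outside columns $1$ and $r=2\cdot 2^{n-2}$, after this sweep the column-$r$ entry equals $-b$ in every remaining row except row $2^{n-2}$ — the parent of the leftmost leaf — where it becomes $c_1-b$. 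Expanding the resulting matrix along column $1=e_r$ yields $\det T_n=(b+c_1+c_2)(-1)^{r+1}\det N$, where $N$ is obtained by deleting row $r$ and column $1$.

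Finally, $N$ splits into four blocks $P,Q,R$ and $bI$, where the top block rows are the $2^{n-1}-1$ interior rows, the bottom block rows are the surviving leaf rows $j>r$, and the two column blocks are the old columns $2,\dots,2^{n-1}$ and $2^{n-1}+1,\dots,2^{n}-1$. After the sweep each leaf row $j>r$ has exactly two nonzero entries, $-b$ in column $r$ and $b$ in column $j$, so $R$ has all columns zero except the last, which is $-b\,\mathbf 1$, and the lower-right block is exactly $bI_{2^{n-1}-1}$. Block Gaussian elimination (equivalently, the Schur complement with respect to the invertible lower-right block) gives $\det N=b^{\,2^{n-1}-1}\det\bigl(P-\tfrac1b QR\bigr)$, and since $R$ has a single nonzero column, $P-\tfrac1b QR$ differs from $P$ only in its last column, which becomes (the last column of $P$) $+\,Q\mathbf 1$. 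Reading off $T_n$, the matrix $P$ already carries the subdiagonal $b$'s and the $c_1,c_2$ pairs prescribed for $\overline T_n$, and its last column is $-b$ except for $c_1-b$ in row $2^{n-2}$; meanwhile $(Q\mathbf 1)_i$ equals the sum of the $c_1,c_2$ entries of row $i$ lying in columns $>2^{n-1}$, namely $0$ for $i<2^{n-2}$, $c_2$ for $i=2^{n-2}$, and $c_1+c_2$ for $i>2^{n-2}$. Adding $Q\mathbf 1$ to the last column of $P$ produces $-b$ for $i<2^{n-2}$ and $c_1+c_2-b$ for $i\ge 2^{n-2}$, which is precisely the last column of $\overline T_n$; hence $P-\tfrac1b QR=\overline T_n$ and $\det N=b^{\,2^{n-1}-1}\det\overline T_n$. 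Combining the displays and using $(-1)^{r+1}b^{\,2^{n-1}-1}=(-b)^{2^{n-1}-1}$ gives the lemma.

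The computation is conceptually routine; the one delicate point is the identification in the last paragraph. The leaf elimination silently pushes certain $c_1,c_2$ coefficients into the final column, and one must check that the net effect reproduces the column $(-b,\dots,-b,c_1+c_2-b,\dots,c_1+c_2-b)$ of $\overline T_n$ exactly — in particular that row $2^{n-2}$, whose $c_1$-child is the very leaf used in the expansion, feeds in only $c_2$ while its column-$r$ entry was already shifted to $c_1-b$, so the sum lands at $c_1+c_2-b$ in harmony with the deeper rows and nothing comes out off by a stray $c_1$ or $c_2$.
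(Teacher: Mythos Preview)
Your proof is correct and follows essentially the same three-step approach as the paper: sum the columns into column $1$ to extract $b+c_1+c_2$, sweep with row $r=2^{n-1}$ and remove the root column, then eliminate the leaf block to pick up $(-b)^{2^{n-1}-1}$ and identify the remaining $(2^{n-1}-1)\times(2^{n-1}-1)$ minor with $\overline T_n$. The only cosmetic difference is in the third step: where you invoke the Schur complement of the invertible lower-right block $bI$, the paper instead adds columns $k>2^{n-1}$ into column $2^{n-1}$, which zeroes $R$ directly and modifies the last column of $P$ by exactly $Q\mathbf 1$ --- the same computation in elementary-operation language.
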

\begin{proof}
Adding the $k$th column of $T_n$ to the first column recursively for $k \geq 2$ derives a new matrix $T_{n}^{(1)}$ defined as
\begin{equation}
T_n^{(1)}(i,j)=
\left\{
\begin{array}{ll}
b, & j=i, 2\leq i\leq 2^{n}-1; \\
c_1, & j=2i, 1\leq i\leq 2^{n-1}-1; \\
c_2, & j=2i+1, 1\leq i\leq 2^{n-1}-1; \\
b+c_1+c_2, & j=1, 1\leq i\leq 2^{n}-1; \\
0, & \mbox{otherwise.}
\end{array}
\right.
\end{equation}
It is seen that $\det T_{n} = \det T_{n}^{(1)}$. Since every entry in the first column of $T_n^{(1)}$ is identical, substituting the entry of the first column by $1$ and then adding $-1$ times of the $2^{n-1}$th row to the other rows recursively produces a new matrix $T_n^{(2)}$ defined as
\begin{equation}
T_n^{(2)}(i,j)=
\left\{
\begin{array}{ll}
b, & j=i, 2\leq i\leq 2^{n}-1; \\
c_1, & j=2i, 1\leq i\leq 2^{n-1}-1, i\neq{2^{n-2}}; \\
c_2, & j=2i+1, 1\leq i\leq 2^{n-1}-1; \\
-b, & j=2^{n-1},1\leq i\leq 2^{n}-1 , i\neq{2^{n-1}}, i\neq{2^{n-2}}; \\
c_1-b, &i=2^{n-2}, j=2^{n-1}; \\
1, & j=1, i=2^{n-1}; \\
0, & \mbox{otherwise.}
\end{array}
\right.
\end{equation}
Evidently, $\det T_{n}^{(1)} = (b+c_1+c_2) \det T_{n}^{(2)}$.

After adding the $k$th column to the $2^{n-1}$th column of $T_n^{(2)}$ recursively for $k > 2^{n-1}$, it follows immediately that the only nonzero element of $k$th row of $T_n^{(2)}$ is $b$ for $k > 2^{n-1}$. Furthermore, it can be verified without difficulty that $\overline{T}_n$ is driven from the following two steps: (i) delete the $k$th row and $k$th column of $T_n^{(2)}$ for $k > 2^{n-1}$; (ii) delete the first column and the last row. This indicates that $\det T_n^{(2)} = (-b)^{2^{n-1}-1} \det \overline{T}_n$. Hence, we derive
$$
\det T_{n} = \det T_{n}^{(1)} = (b+c_1++c_2) \det T_{n}^{(2)} = (b+c_1+c_2) (-b)^{2^{n-1}-1} \det \overline{T}_{n}
$$
and the proof is complete.
\end{proof}

To evaluate $\det \overline{T}_n$, we need two more lemmas.

\begin{lemma} \label{alpha2}
Let $\alpha_3=c_1+c_2-b$ and let $\alpha_n$ be defined recursively as
$$
\alpha_n = -\dfrac{\alpha_{n-1}(c_1+c_2)+b^2}{b}, \quad n \geq 4.
$$
Then
\begin{equation}\label{eq:alpha-n}
\alpha_n=(-1)^{n-3}\dfrac{\sum^{n-2}_{j=0}(-1)^{j}(c_1+c_2)^{n-2-j}b^{j}}{b^{n-3}},
\end{equation}
where $n \geq 3$.
\end{lemma}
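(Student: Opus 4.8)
The plan is to verify the claimed closed form \eqref{eq:alpha-n} by induction on $n \geq 3$, since $\alpha_n$ is defined by a first-order linear recurrence. First I would check the base case $n = 3$: the right-hand side of \eqref{eq:alpha-n} reduces to $(-1)^{0}(c_1+c_2)^{1} b^{0} + (-1)^{1}(c_1+c_2)^{0} b^{1}$ over $b^{0}$, which is exactly $c_1 + c_2 - b = \alpha_3$, so the formula holds. (If desired one may also spot-check $n=4$ against the recurrence, matching the expression $\alpha_4 = -\big((c_1+c_2)(c_1+c_2-b)+b^2\big)/b$ that already appeared in the worked example for $\det T_4$.)

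For the inductive step, assume \eqref{eq:alpha-n} holds for $n-1$, i.e.
$$
\alpha_{n-1} = (-1)^{n-4}\,\frac{\sum_{j=0}^{n-3}(-1)^j (c_1+c_2)^{n-3-j} b^j}{b^{n-4}},
$$
and substitute into $\alpha_n = -\big(\alpha_{n-1}(c_1+c_2) + b^2\big)/b$. Multiplying through, the term $\alpha_{n-1}(c_1+c_2)$ becomes $(-1)^{n-4} b^{-(n-4)}\sum_{j=0}^{n-3}(-1)^j (c_1+c_2)^{n-2-j} b^j$, which after the outer factor $-1/b$ is $(-1)^{n-3} b^{-(n-3)}\sum_{j=0}^{n-3}(-1)^j (c_1+c_2)^{n-2-j} b^j$; this supplies precisely the $j = 0, 1, \ldots, n-3$ terms of the target sum $\sum_{j=0}^{n-2}(-1)^j (c_1+c_2)^{n-2-j} b^j$ with the correct sign and the correct power $b^{-(n-3)}$ in front. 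The remaining piece $-b^2/b = -b$, rewritten as $(-1)^{n-3} b^{-(n-3)} \cdot (-1)^{n-2} b^{n-2}$, contributes exactly the missing $j = n-2$ term $(-1)^{n-2}(c_1+c_2)^{0} b^{n-2}$. Combining the two yields \eqref{eq:alpha-n} for $n$, completing the induction.

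The computation is entirely routine bookkeeping of signs and exponents; the only mild subtlety is keeping the parity factors $(-1)^{n-3}$ versus $(-1)^{n-4}$ straight and checking that the $-b^2/b$ term lands in the correct slot of the summation, so I would present that reconciliation carefully rather than the rest of the algebra. No obstacle of substance is expected here — the lemma is a lemma precisely because it isolates this manipulation from the determinant argument to follow.
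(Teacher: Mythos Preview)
Your proof is correct and follows essentially the same route as the paper: both verify the base case $n=3$ directly and then carry out the inductive step by substituting the assumed closed form into the recurrence, tracking the sign factors and recognizing the $-b^2/b$ contribution as the missing $j=n-2$ term of the sum. The only cosmetic difference is indexing (you induct from $n-1$ to $n$, the paper from $m$ to $m+1$).
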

\begin{proof}
We prove it by induction. Let $K=c_1+c_2$; it is seen that \eqref{eq:alpha-n} holds for $n=3$. Suppose \eqref{eq:alpha-n} is true when $n=m$ for some $m \geq 3$. Then
\begin{align*}
\alpha_{m+1}&=-\dfrac{\alpha_mK+b^2}{b}\\
&=-\dfrac{\left[(-1)^{m-3}\dfrac{\sum^{m-2}_{j=0}(-1)^{j}K^{m-2-j}b^{j}}{b^{m-3}}\right]K+b^2}{b}\\
&=-\dfrac{(-1)^{m-3}(\sum^{m-2}_{j=0}(-1)^{j}K^{m-2-j}b^{j})K+b^{m-1}}{b^{m-2}}\\
&=(-1)^{m-2}\dfrac{(\sum^{m-2}_{j=0}(-1)^{j}K^{m-1-j}b^{j})+(-1)^{m-3} b^{m-1}}{b^{m-2}}\\
&=(-1)^{m-2}\dfrac{\sum^{m-1}_{j=0}(-1)^{j}K^{m-1-j}b^{j}}{b^{m-2}}
\end{align*}
That is, \eqref{eq:alpha-n} remains to be true for $n = m + 1$. The proof is then complete by the mathematical induction principle.
\end{proof}

\begin{lemma} \label{combine22}
Suppose that $0 \leq k \leq n-3$. Let
\begin{equation}
F_n^{k}(i,j)=
\left\{
\begin{array}{ll}
\alpha_{k+3}, & i=2^{n-k-1}-1,j\geq{2^{n-k-2}}; \\
\overline{T}_{n-k}(i, j), & \hbox{otherwise}.
\end{array}
\right.
\end{equation}
Then $\det \overline{T}_{n} = (-b)^{2^{n-1} - 4} \det F^{n-3}_{n}$, where $\alpha_k$ is the same as defined in Lemma \ref{alpha2}.
\end{lemma}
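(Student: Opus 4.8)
\noindent\emph{Proof strategy.} The plan is to induct on $k$ and prove the more precise statement
\[
\det \overline{T}_n = (-b)^{\,2^{n-1}-2^{n-k-1}}\det F_n^k, \qquad 0\le k\le n-3,
\]
of which the $k=n-3$ case is the lemma (since $2^{n-1}-2^{2}=2^{n-1}-4$). The base case $k=0$ is immediate because $\alpha_3=c_1+c_2-b$, so $F_n^0=\overline{T}_n$. Since $2^{n-1}-2^{n-k-1}+2^{n-k-2}=2^{n-1}-2^{n-k-2}$ and every exponent that appears is even (for $k\le n-3$ one has $n-k-1\ge 2$), the inductive step reduces to proving
\[
\det F_n^k=(-b)^{\,2^{n-k-2}}\det F_n^{k+1},\qquad 0\le k\le n-4 .
\]

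To do this, write $m=n-k\ge 4$ and $N=2^{m-1}-1$, so $F_n^k$ is the $N\times N$ matrix obtained from $\overline{T}_m$ by replacing the entries equal to $c_1+c_2-b$ in its last column by $\alpha_{k+3}$. I would partition $\{1,\dots,N\}$ into the initial block $[1,2^{m-3}-1]$, the middle block $[2^{m-3},2^{m-2}-1]$, and the bottom block $[2^{m-2},N]$, and record three facts about $F_n^k$: (a) each bottom-block row $i$ has exactly one nonzero entry outside the last column, namely $b$ in column $i-1\in[2^{m-2}-1,N-1]$, together with $\alpha_{k+3}$ in column $N$; (b) a middle-block row $i$ has its $c_1,c_2$ entries precisely in the columns $2i-1,2i\in[2^{m-2}-1,N-1]$, an entry $b$ in column $i-1\le 2^{m-2}-2$, and $-b$ in column $N$; (c) an initial-block row has all of its nonzero entries either in columns $\le 2^{m-2}-2$ or in column $N$. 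Using (a) and (b) I clear the $c_1,c_2$ entries of each middle-block row $i$ by subtracting $\frac{c_1}{b}$ times bottom-block row $2i$ and $\frac{c_2}{b}$ times bottom-block row $2i+1$ (legal since $b\ne 0$; these pairs are disjoint and exhaust the bottom block). This leaves the determinant unchanged, and by (a) it converts the column-$N$ entry of row $i$ from $-b$ into
\[
-b-\frac{c_1+c_2}{b}\,\alpha_{k+3}=-\frac{(c_1+c_2)\alpha_{k+3}+b^2}{b}=\alpha_{k+4},
\]
the last step being exactly the recursion of Lemma \ref{alpha2}.

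After this clearing, by (b) and (c) the $2^{m-2}$ columns in $[2^{m-2}-1,N-1]$ have their only nonzero entries in the bottom block, and the corresponding $2^{m-2}\times 2^{m-2}$ submatrix equals $b\,I$. A Laplace expansion along these columns then has a single surviving term, $\det F_n^k=\varepsilon\,b^{2^{m-2}}\det M'$, where $M'$ is the complementary minor supported on the initial and middle blocks together with column $N$. The sign is $\varepsilon=(-1)^{\sigma(S)+\sigma(T)}$, where $T=[2^{m-2}-1,N-1]$ is the set of deleted columns and $S=[2^{m-2},N]$ the set of deleted rows; since $S$ is $T$ shifted up by one and $|T|=2^{m-2}$ is even, $\sigma(S)+\sigma(T)=2\sigma(T)+2^{m-2}$ is even, so $\varepsilon=1$ and $b^{2^{m-2}}=(-b)^{2^{m-2}}$. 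Relabelling column $N$ as the new last column, one verifies that $M'$ is exactly $F_n^{k+1}$: the initial-block rows are untouched and agree with the first $2^{m-3}-1$ rows of $\overline{T}_{m-1}$; the surviving entries $b$ in column $i-1$ of the middle-block rows match those of $\overline{T}_{m-1}$; and the column-$N$ entries of the middle-block rows are now $\alpha_{k+4}=\alpha_{(k+1)+3}$, which is precisely the modification defining $F_n^{k+1}$ from $\overline{T}_{m-1}$. Hence $\det F_n^k=(-b)^{2^{m-2}}\det F_n^{k+1}$, completing the induction; summing, $\sum_{j=0}^{n-4}2^{n-j-2}=2^{n-1}-4$.

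The step I expect to require the most care is the index bookkeeping in the inductive step: pinning down which rows and columns of $F_n^k$ fall into each of the three blocks, checking that the clearing operations use each bottom-block $b$ exactly once, and --- most delicately --- confirming that the complementary minor $M'$ is $F_n^{k+1}$ on the nose, including at the smallest case $m=4$ (the worked $n=4$ example). The Laplace-sign computation is the only other place an error could slip in, and it reduces to the parity of $2^{m-2}$.
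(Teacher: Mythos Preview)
Your proof is correct and follows essentially the same approach as the paper: both arguments observe that $F_n^0=\overline{T}_n$, then iteratively clear the $c_1,c_2$ entries of the ``middle'' rows using the $b$-entries of the ``bottom'' rows (which produces $\alpha_{k+4}$ in the last column via the recursion of Lemma~\ref{alpha2}), and then strip off a $b^{2^{n-k-2}}$ block to pass from $F_n^k$ to $F_n^{k+1}$. Your write-up is considerably more careful than the paper's sketch---you set up the induction explicitly, track the three row-blocks precisely, and justify the Laplace-expansion sign---whereas the paper asserts the reduction $\det\overline{T}_n=(-b)^{2^{n-2}}\det F_n^1$ after a somewhat loose description of the row operations and then says ``repeat $n-3$ times.''
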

\begin{proof}
Observe that $F_n^0 = \overline{T}_n$. For $1 \leq i \leq 2^{n-2}-2$, adding $\dfrac{-c_2}{b}$ times of the $(2^{n-1}-(2i-1))$th row and $\dfrac{-c_1}{b}$ times of the $(2^{n-1}-2i)$th row to the $(2^{n-2}-i)$th row of $\overline{T}_{n}$ is followed by
$$
\overline{T}'_{n}(i,j) = \left\{
                          \begin{array}{ll}
                            0, & 2 \leq i \leq 2^{n-2}-1, 2i-1 \leq j \leq 2i\hbox{;} \\
                            \alpha_4, & j=2^{n-1}-1, 2 \leq i \leq 2^{n-2}-1; \\
                            \overline{T}_n(i, j), & \hbox{otherwise.}
                          \end{array}
                        \right.
$$
It is seen that there is only one nonzero entry, which is $b$, in the $k$th row of $\overline{T}'_{n}$ for $3 \leq k \leq 2^{n-1}-1$. Evidently, we derive that
$$
\det \overline{T}_{n} = \det \overline{T}'_{n} = (-b)^{2^{n-2}} \det F^{1}_{n}.
$$
Applying similar operation to $F^{1}_{n}$ infers that $\det F^{1}_{n} = (-b)^{2^{n-3}} \det F^{2}_{n}$; the desired result follows by repeating the procedure for $n-3$ times.
\end{proof}

Now we are on the stage of presenting the proof of Theorem \ref{thm:formula-Tn}.

\begin{proof}[Proof of Theorem \ref{thm:formula-Tn}]
Observe that
\begin{align*}
\det F^{n-3}_{n} &= \det
\left(
\begin{array}{ccc}
c_1 & c_2 & -b \\
b   & 0   & \alpha_n \\
0   & b   & \alpha_n \\
\end{array}
\right)\\
&=-bK\alpha_n-b^3\\
&=-bK(-1)^{n-3}\dfrac{\sum^{n-2}_{j=0}(-1)^{j}K^{n-2-j} b^{j}}{b^{n-3}}-b^3\\
&=(-1)^{n-2}\dfrac{\sum^{n-2}_{j=0}(-1)^{j} K^{n-1-j} b^{j} + (-1)^{n-1} b^{n-1}}{b^{n-4}}\\
&=(-1)^{n-2} b^{4-n} \sum^{n-1}_{j=0}(-1)^{j} K^{n-1-j} b^{j},
\end{align*}
where $K = c_1 + c_2$.

By integrating Lemmas \ref{combine21}, \ref{alpha2}, and \ref{combine22}, we demonstrate that
\begin{align*}
\det T_n &= (b+c_1+c_2)(-b)^{2^{n-1}-1} \det \overline{T}_n \\
&=(b+c_1+c_2)(-b)^{2^{n-1}-1}(-b)^{2^{n-1} - 4} \det F^{n-3}_{n}\\
&=(b+c_1+c_2)(-b)^{2^n - 5} (-1)^{n-2} b^{4-n} \sum^{n-1}_{j=0} (-1)^{j} K^{n-1-j} b^{j} \\
&=(b+c_1+c_2)(-b)^{2^n - n - 1} \sum^{n-1}_{j=0} (-1)^{j} K^{n-1-j} b^{j} \\
&=(b+c_1+c_2)(-b)^{2^n - n - 1} \sum^{n-1}_{j=0} (-1)^{j}(c_1+c_2)^{n-1-j}b^{j}
\end{align*}
and this completes the proof.
\end{proof}

\section{Reversibility of CA on General Cayley Trees}

This section extends results of cellular automata defined on binary tree to general Cayley trees; in other words, this section illustrates the case where $\Sigma = \{1, 2, \ldots, d\}$ for some $d \geq 2$. Let $d_n=d+d^2+...+d^{n}$; the matrix presentation $T_n$ of CA $T_f$, whose local rule is given by $f(x_0, x_1, \ldots, x_d) = b x_0 + \sum\limits_{i=1}\limits^d c_i x_i \pmod{m}$, defined on $\mathcal{T}_n$ is defined as
\begin{equation}
T_n(i,j)=
\left\{
\begin{array}{ll}
b, & i=j; \\
c_k, & j=(i-1)d+k+1, 1 \leq i \leq 1+d_{n-2}, 1 \leq k \leq d; \\
c_1+c_2+...+c_d, & j=1, 2+d_{n-2}\leq i\leq 1+d_{n-1}; \\
0, & \hbox{otherwise.}
\end{array}
\right.
\end{equation}
The reversibility of $T_f$ is related to the determinant of $T_n$ being nonzero. The following theorem extends Theorem \ref{thm:formula-Tn}.

\begin{theorem} \label{thm:formula-Tn-d-children}
Let $T_n$ be the matrix presentation of the linear TCA $T_f$ defined on $\mathcal{T}_n$ with periodic boundary condition. Then
\begin{equation}
\det T_n = (-1)^{d_{n-1} - n + 1} (b + c) b^{d_{n-1} - n + 1} \sum_{j=0}^{n-1} (-1)^j c^{n-1-j} b^j,
\end{equation}
where $c = c_1 + \cdots + c_d$.
\end{theorem}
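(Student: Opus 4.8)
The cleanest route I see is to expand $\det T_n$ directly as a signed sum over permutations and to exploit the near-triangular shape of $T_n$. With the ordering $\Theta$, the matrix $T_n$ is upper triangular with every diagonal entry equal to $b$, the only exceptions being the entries $c=c_1+\cdots+c_d$ that the $d^{n-1}$ leaf rows place in the first column. Consequently, in the weighted digraph of $T_n$ the only directed cycles are the self-loops (each of weight $b$) and, for each leaf $\ell$ of $\Sigma_{n-1}$ with address $(k_1,\dots,k_{n-1})$, the unique cycle $\mathrm{root}\to\cdots\to\ell\to\mathrm{root}$, which has length $n$, weight $c\,c_{k_1}\cdots c_{k_{n-1}}$, and sign $(-1)^{n-1}$. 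Since the root lies in exactly one cycle of any permutation, every nonzero term of $\det T_n=\sum_{\sigma}\operatorname{sgn}(\sigma)\prod_i T_n(i,\sigma(i))$ corresponds either to the all--self-loops permutation (contributing $b^{1+d_{n-1}}$) or to a single length-$n$ cycle of the above type together with self-loops on the remaining $1+d_{n-1}-n$ vertices. Summing the leaf contributions, $\sum_{\ell}c_{k_1}\cdots c_{k_{n-1}}=(c_1+\cdots+c_d)^{n-1}=c^{n-1}$, so
\[
\det T_n=b^{1+d_{n-1}}+(-1)^{n-1}c^{n}\,b^{d_{n-1}-n+1}=b^{d_{n-1}-n+1}\bigl(b^{n}+(-1)^{n-1}c^{n}\bigr).
\]
Factoring and invoking the elementary identity $(b+c)\sum_{j=0}^{n-1}(-1)^{j}c^{n-1-j}b^{j}=c^{n}-(-b)^{n}$ then delivers the stated formula; the precise overall sign is immaterial for the reversibility criterion of Proposition \ref{prop:reversible-matrix-determinant}, which depends only on whether $\det T_n$ vanishes.

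\smallskip

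\noindent An alternative that parallels Section \ref{sec:proof-main-theorem} step by step is to repeat the proof of Theorem \ref{thm:formula-Tn} with $2$ replaced by $d$, each power $2^{k}$ replaced by $d^{k}$, and $c_1+c_2$ replaced by $c=c_1+\cdots+c_d$. One first proves the $d$-ary analogue of Lemma \ref{combine21}: every row of $T_n$ has row sum $b+c$, so adding all columns into the first makes that column constant; after factoring out $b+c$, normalising the first column to $1$'s, subtracting a fixed leaf row from the others, and expanding along the leaf rows (now rows with a single nonzero entry $b$) one obtains $\det T_n=(b+c)(-b)^{d^{n-1}-1}\det\overline{T}_n$, where $\overline{T}_n$ is the natural $(1+d_{n-2})\times(1+d_{n-2})$ matrix. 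One then keeps Lemma \ref{alpha2} verbatim with $c_1+c_2$ read as $c$, and proves the $d$-ary analogue of Lemma \ref{combine22}: peeling one tree layer at a time, for each node $v$ in the current bottom layer one adds $-c_k/b$ times each of the $d$ child-rows of $v$ to the row of $v$, which clears $v$'s entries $c_1,\dots,c_d$ and turns its distinguished last-column entry from $\alpha_{k+3}$ into $\alpha_{k+4}$ precisely via the recurrence $\alpha\mapsto-(\alpha c+b^{2})/b$ (the $b^{2}$ from rescaling the $-b$ already sitting there, the $\alpha c$ from $\sum_k(c_k/b)\alpha$). After $n-3$ layers one is left with the $(d+1)\times(d+1)$ matrix $F_n^{n-3}$ whose first row is $(c_1,\dots,c_d,-b)$ and whose remaining rows carry a $b$ on the subdiagonal and $\alpha_n$ in the last column; a single Laplace expansion along that column gives $\det F_n^{n-3}=(-1)^{d+1}b^{d-1}(b^{2}+c\,\alpha_n)$, which reduces to $-bc\,\alpha_n-b^{3}$ when $d=2$. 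Substituting \eqref{eq:alpha-n} and multiplying the three identities then collapses the powers of $-b$ to the single exponent $d_{n-1}-n+1$ and reproduces the formula.

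\smallskip

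\noindent The step I expect to require genuine care in the second route is the bookkeeping of the powers of $-b$ for general $d$: one must check that exactly $d^{n-1}-1$ single-$b$ rows are expanded away in the first reduction, exactly $d^{2}+\cdots+d^{n-2}$ across the $n-3$ peeling steps, and that each elementary operation preserves the self-similar shape so that the $\alpha$-recurrence is triggered cleanly; this interaction of the sign factors with the parity of $d$ is the only place where $d>2$ changes anything substantive. The permutation-expansion route sidesteps all of this, and it is the one I would actually write up.
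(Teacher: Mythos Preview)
Your permutation-expansion route is correct and is genuinely different from the paper's proof. The paper proceeds exactly as your second sketch: it proves $d$-ary analogues of Lemmas \ref{combine21}, \ref{alpha2}, \ref{combine22} (these are Lemmas \ref{combined1}, \ref{lem:alpha-n-general-case}, \ref{combined2}), reducing $T_n$ by column/row operations to a $(d+1)\times(d+1)$ matrix $F_n^{n-3}$, evaluating that directly, and then reassembling the powers of $-b$. Your observation that every non-identity permutation contributing to the Leibniz sum must contain the root and hence consists of a single root-to-leaf-to-root $n$-cycle plus fixed points bypasses all of that bookkeeping in one stroke; the identity $\sum_{\ell}c_{k_1}\cdots c_{k_{n-1}}=c^{n-1}$ then collapses the sum immediately to $b^{\,d_{n-1}-n+1}\bigl(b^{n}-(-c)^{n}\bigr)$. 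This is substantially shorter and more transparent than the paper's reduction, and it generalises without any parity casework on $d$.

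One point worth flagging: your computation yields the sign prefactor $(-1)^{n-1}$, whereas the theorem states $(-1)^{d_{n-1}-n+1}$; these agree when $d_{n-1}$ is even (in particular for all $n$ when $d$ is even, which covers the binary case of Theorem \ref{thm:formula-Tn}) but disagree when $d_{n-1}$ is odd. A direct check at $d=3$, $n=2$ gives $\det T_2=b^{2}(b^{2}-c^{2})$, matching your $(-1)^{n-1}$ and not the stated $(-1)^{d_{n-1}-n+1}$. So your remark that the overall sign is immaterial for Proposition \ref{prop:reversible-matrix-determinant} is exactly right, and in fact your approach exposes a sign slip in the stated formula for odd $d$; the discrepancy does not affect any of the reversibility criteria in Section 3 or Section 5.
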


Similar to the proof of Theorem \ref{thm:formula-Tn}, we need several lemmas for the demonstration of Theorem \ref{thm:formula-Tn-d-children}.

\begin{lemma} \label{combined1}
For  $n\in \mathbb N$, let  $\overline{T}_{n}$ be defined as
\begin{equation}
\overline{T}_{n}(i,j)=
\left\{
\begin{array}{ll}
       -b, & j=1+d_{n-2},\ 1\leq{i}\leq{1+d_{n-3}}; \\
       c_1+c_2+...+c_{d}-b, &j=1+d_{n-2},\ 2+d_{n-3}\leq{i}\leq{1+d_{n-2}}; \\
        b, & j=i-1,\ 2\leq i \leq 1+d_{n-2};  \\
      c_k, & j=(i-1)d+k,\ 1\leq i\leq 1+d_{n-3}, 1 \leq k \leq d; \\
        0, & \hbox{otherwise.}
\end{array}
\right.
\end{equation}
Then $\det T_n=(-b)^{d^{n-1}-1}(b+c_1+c_2+...+c_d)\det\overline{T}_{n}$.
\end{lemma}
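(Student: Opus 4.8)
The plan is to mimic exactly the three-step column/row manipulation used in the proof of Lemma \ref{combine21}, now carried out over an alphabet of size $d$ instead of $2$. First I would add every column $k\ge 2$ of $T_n$ to the first column; because the local rule puts a $b$ on the diagonal, each $c_k$ one step below, and the constant $c:=c_1+\cdots+c_d$ in the bottom block, this collapses the first column to the single value $b+c$ in every row (in the top $1+d_{n-2}$ rows the entries $b$ and $c_1,\dots,c_d$ sum to $b+c$; in the bottom $d_{n-1}-d_{n-2}$ rows the entries $b$ and $c$ sum to $b+c$). This does not change the determinant, and it produces a matrix $T_n^{(1)}$ whose first column is the constant vector $(b+c,\dots,b+c)^{\mathsf T}$.

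Next I would factor out $b+c$ from the first column — so $\det T_n=(b+c)\det T_n^{(2)}$ where $T_n^{(2)}$ has a column of $1$'s in position $1$ — and then pivot on the $(1+d_{n-2})$-th row (the first row of the bottom block, whose first entry is now $1$ and which has a $b$ in column $1+d_{n-2}$, exactly as in the binary case). Subtracting suitable multiples of that row from all other rows clears the first column except in the pivot row and turns the other rows of the bottom block into rows supported on columns $1$ and $1+d_{n-2}$ only, with a $-b$ in column $1+d_{n-2}$ for $i\le 1+d_{n-3}$ and $c-b$ for $i$ in the lower part of the bottom block (this is where the two cases in the definition of $\overline T_n$ come from). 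The remaining bottom rows (indices $>1+d_{n-1}$, i.e. the rows whose children are leaves but which are above the pivot in lex order — more precisely the rows of $T_n^{(2)}$ outside the first $1+d_{n-1}$ indices) now each have exactly one nonzero entry $b$ on the diagonal: after adding the appropriate columns into column $1+d_{n-2}$ these rows decouple completely. Deleting those trivial rows/columns contributes a factor $(-b)^{\,d^{n-1}-1}$ (the count being $(d_{n-1}-1)-(1+d_{n-2})+1 = d^{n-1}-1$ rows removed with pivot value $b$, with the sign coming from the Laplace expansion along those singleton rows), and after finally deleting the first column and the last row one is left precisely with the $(1+d_{n-2})\times(1+d_{n-2})$ matrix $\overline T_n$ as defined. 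Assembling the factors gives $\det T_n=(-b)^{d^{n-1}-1}(b+c)\det\overline T_n$.

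The routine but slightly delicate bookkeeping is in the index arithmetic: verifying that the row of the bottom block one pivots on has its "$b$" in column $1+d_{n-2}$, that after the pivot the rows split into exactly the two ranges $1\le i\le 1+d_{n-3}$ and $2+d_{n-3}\le i\le 1+d_{n-2}$ with values $-b$ and $c-b$ respectively, and that the count of decoupled singleton rows is exactly $d^{n-1}-1$ so the sign and power of $b$ match. I expect this index/sign bookkeeping — rather than any conceptual difficulty — to be the main obstacle, since the $d$-ary tree indexing via $d_n=d+d^2+\cdots+d^n$ is less transparent than the $2^n-1$ indexing of the binary case; but it is a direct generalization of the computation already carried out in Lemma \ref{combine21}, so I would present it by pointing to that parallel and checking the handful of index identities ($d^{n-1}-1=d_{n-1}-1-d_{n-2}$, $(1+d_{n-3})d+k$ lands correctly, etc.) explicitly.
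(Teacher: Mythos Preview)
Your proposal follows exactly the paper's proof: add all later columns into the first to extract the factor $b+c$, subtract the first leaf row from every other row to clear column one, then fold the trailing leaf columns into the pivot column and expand along the resulting $d^{n-1}-1$ singleton rows to produce the factor $(-b)^{d^{n-1}-1}$ and the residual matrix $\overline{T}_n$. Your flagged worry about index bookkeeping is warranted---the pivot (first leaf) row is $2+d_{n-2}$, not $1+d_{n-2}$, its diagonal $b$ sits in column $2+d_{n-2}$, and there are no rows of index $>1+d_{n-1}$ since that is the matrix size---but these are precisely the minor slips you anticipated, and the strategy is correct and identical to the paper's.
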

\begin{proof}
The proof is similar to the discussion of proof of Lemma \ref{combine21}, thus we only sketch the main steps herein.

Adding the $k$th column of $T_n$ to the first column recursively for $k \geq 2$ derives a new matrix $T_{n}^{(1)}$ defined as
\begin{equation}
T_n^{(1)}(i,j)=
\left\{
\begin{array}{ll}
b, & i=j,\ 2\leq i\leq 1+d_{n-1}; \\
c_k, & j=(i-1)d+k+1,\ 1\leq i\leq 1+d_{n-2}, 1 \leq k \leq d; \\
b+c_1+\cdots+c_d, & j=1,\ 1\leq i \leq 1+d_{n-1}; \\
0, & \hbox{otherwise.}
\end{array}
\right.
\end{equation}
It is seen that $\det T_{n}=\det{T_{n}^{(1)}}$. Since each entry in the first column of $T_n^{(1)}$ is identical, replacing the entry in the first column by $1$ and adding the $-1$ times of the $(d_{n-2} + 2)$th row to the other rows recursively produce a new matrix $T_n^{(2)}$ defined as
\begin{equation}
T_n^{(2)}(i,j)=
\left\{
\begin{array}{ll}
b, & j=i,\ 2\leq i\leq 1+d_{n-1}; \\
c_1, & j=(i-1)d+2,\ 1\leq i\leq 1+d_{n-2},\ i\neq 2+d_{n-3}; \\
c_k, & j=(i-1)d+k+1,\ 1\leq i\leq 1+d_{n-2}, 2 \leq k \leq d; \\
-b, & j=2+d_{n-2},\ 1\leq i\leq 1+d_{n-1},\ i\neq 2+d_{n-3},\ i\neq 2+d_{n-2}; \\
c_1-b, &i=2+d_{n-3},\ j=2+d_{n-2}; \\
1, & j=1,\ i=2+d_{n-2}; \\
0, & \hbox{otherwise.}
\end{array}
\right.
\end{equation}
It follows immediately that $\det T_{n}=\det T_{n}^{(1)}=(b+c_1+c_2+...+c_d)\det T_{n}^{(2)}$.

Furthermore, after adding the $(3+d_{n-2})$th, $(4+d_{n-2})$th, $\ldots$, $(1+d_{n-1})$th column to the $(2+d_{n-2})$th column of $T_n^{(2)}$, it can be verified without difficulty that $\det T_{n}^{(2)} = (-b)^{d^{n-1}-1} \det \overline{T}_{n}$. Evidently,
$$
\det T_{n} = (b+c_1+c_2+...+c_d)\det T_{n}^{(2)} = (-b)^{d^{n-1}-1} (b+c_1+c_2+...+c_d) \det \overline{T}_{n}.
$$
This completes the proof.
\end{proof}


\begin{lemma}\label{lem:alpha-n-general-case}
Let $\alpha_3=c_1+c_2+...+c_d-b$ and let $\alpha_n$ be defined recursively as
$$
\alpha_n = -\dfrac{\alpha_{n-1}(c_1+c_2+...+c_d)+b^2}{b} \quad \text{for} \quad n \geq 4.
$$
Then
$$
\alpha_n=(-1)^{n-3}\dfrac{\sum^{n-2}_{j=0}(-1)^{j}(c_1+c_2+...+c_d)^{n-2-j}(b)^{j}}{b^{n-3}}.
$$
\end{lemma}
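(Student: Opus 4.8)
The plan is to prove this by induction on $n$, repeating essentially verbatim the argument given for Lemma~\ref{alpha2}; the only change needed is to replace the scalar $c_1 + c_2$ throughout by $K := c_1 + c_2 + \cdots + c_d$, and no step of that proof uses the number of summands defining $K$. In particular the recursion $\alpha_n = -(\alpha_{n-1} K + b^2)/b$ and the target formula have exactly the same shape as in the two-children case, and (as there) we work under the standing assumption $b \neq 0$ so that the divisions are legitimate.

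For the base case $n = 3$, the right-hand side of the asserted identity reduces to $(-1)^{0} b^{0} \sum_{j=0}^{1} (-1)^{j} K^{1-j} b^{j} = K - b$, which agrees with the definition $\alpha_3 = K - b$. For the inductive step I would assume the identity for some $m \geq 3$, substitute it into $\alpha_{m+1} = -(\alpha_m K + b^2)/b$, clear the common denominator, and simplify. Multiplying $\alpha_m$ by $K$ shifts the summation $\sum_{j=0}^{m-2} (-1)^j K^{m-2-j} b^j$ into $\sum_{j=0}^{m-2} (-1)^j K^{m-1-j} b^j$; adding $b^2$ and dividing by $b$ then contributes the extra monomial $(-1)^{m-1} b^{m-1}$ once the overall sign $(-1)^{m-2} = (-1)^{(m+1)-3}$ is factored out, and this monomial is precisely the missing $j = m-1$ term (with $K^{0} = 1$) of $\sum_{j=0}^{m-1} (-1)^j K^{m-1-j} b^j$. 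Hence the numerator becomes the full sum $\sum_{j=0}^{m-1} (-1)^j K^{m-1-j} b^j$ over $b^{m-2} = b^{(m+1)-3}$, which is exactly the claimed formula at $n = m+1$.

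The only place demanding care is this sign-and-exponent bookkeeping in the inductive step — tracking the power of $b$ produced by the division and recognizing the stray monomial as the top-degree term of the extended sum. Since this is identical to the corresponding computation in the proof of Lemma~\ref{alpha2}, I would present it as the same condensed chain of equalities and omit further commentary.
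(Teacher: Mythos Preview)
Your proposal is correct and matches the paper's own treatment exactly: the paper simply states that the proof is analogous to that of Lemma~\ref{alpha2} and omits it, and your induction with $K = c_1 + \cdots + c_d$ in place of $c_1 + c_2$ is precisely that analogous argument.
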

\begin{proof}
The proof is analogous to the proof of Lemma \ref{alpha2}, hence is omitted.
\end{proof}


\begin{lemma} \label{combined2}
For $0 \leq k \leq n-3$, let
\begin{equation}
F_n^{k}(i,j)=
\left\{
\begin{array}{ll}
\alpha_{k+3}, & i=d^{n-k-1}-1, j \geq {d^{n-k-2}}; \\
\overline{T}_{n-k} (i, j), & \hbox{otherwise.}
\end{array}
\right.
\end{equation}
Then $\det \overline{T}_{n}=(-b)^{{\sum^{n-2}_{i=2}}d^i} \det F^{n-3}_{n}$.
\end{lemma}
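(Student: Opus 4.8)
The plan is to follow the proof of Lemma~\ref{combine22} line for line, with the binary branching replaced by $d$-ary branching throughout; the only arithmetic ingredient needed is the recursion for the entries $\alpha_m$, which is already supplied by Lemma~\ref{lem:alpha-n-general-case}. Concretely, I would establish the single-step identity $\det F_n^{k} = (-b)^{d^{\,n-k-2}}\det F_n^{k+1}$ for $0 \le k \le n-4$ and then telescope it over $k$.

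First I would note the base case: comparing the definitions in Lemma~\ref{combined1} and in the statement of Lemma~\ref{combined2}, one has $F_n^{0} = \overline{T}_n$, because $\alpha_3 = c_1 + \cdots + c_d - b$ is exactly the bottom-block entry of the last column of $\overline{T}_n$. For the inductive step, fix $k$ and examine $F_n^{k}$, which is $\overline{T}_{n-k}$ with its bottom block of last-column entries replaced by $\alpha_{k+3}$: its first $1+d_{n-k-3}$ rows carry a band of the coefficients $c_1,\ldots,c_d$ in early columns, there is a $b$-subdiagonal, and the last column holds $-b$'s in the top rows and $\alpha_{k+3}$'s in the bottom rows. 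For each row $i$ in the coefficient band I would add $-c_\ell/b$ times the $((i-1)d+\ell)$-th row to it for $\ell = 1,\ldots,d$; those lower rows were collapsed to a single nonzero entry $b$ during the previous round, so this clears the $c_\ell$'s and, via the recursion $\alpha_m = -(\alpha_{m-1}(c_1+\cdots+c_d)+b^2)/b$ of Lemma~\ref{lem:alpha-n-general-case}, turns the last-column entry of row $i$ into $\alpha_{k+4}$. After all these operations exactly $d^{\,n-k-2}$ rows have a single nonzero entry $b$; expanding the determinant along these rows (equivalently, deleting them together with their pivot columns and, as in Lemma~\ref{combine22}, deleting the first column and the last row) contributes the factor $(-b)^{d^{\,n-k-2}}$ and leaves precisely $F_n^{k+1}$. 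Telescoping over $k = 0,\ldots,n-4$ then gives
\[
\det \overline{T}_n = \det F_n^{0} = \Bigl(\prod_{k=0}^{n-4}(-b)^{d^{\,n-k-2}}\Bigr)\det F_n^{\,n-3} = (-b)^{\sum_{i=2}^{n-2} d^{\,i}}\det F_n^{\,n-3},
\]
which is the claim.

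The arithmetic core — that the last-column entries march through the $\alpha$-recursion, and that the exponents of $-b$ add up to $\sum_{i=2}^{n-2} d^{\,i}$ by the obvious geometric-series count — is identical to the binary case and poses no real difficulty. The main obstacle will be purely combinatorial bookkeeping in the $d$-ary setting: tracking which row indices carry which $c_\ell$, which rows have already been reduced to a lone $b$, and verifying that after removing the correct $d^{\,n-k-2}$ rows and columns the surviving matrix coincides \emph{exactly} with $F_n^{k+1}$ rather than merely having the same shape. I would handle this as the binary proof does, by writing out the entry pattern after each elementary operation and matching it against the closed-form definition of $F_n^{k+1}$.
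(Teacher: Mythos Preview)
Your proposal is correct and follows essentially the same approach as the paper: noting $F_n^{0}=\overline{T}_n$, performing the $-c_\ell/b$ row operations to clear the coefficient band (which drives the $\alpha$-recursion in the last column), extracting $(-b)^{d^{\,n-k-2}}$ from the single-$b$ rows, and iterating. One small slip: the row carrying the pivot $b$ in column $(i-1)d+\ell$ is row $(i-1)d+\ell+1$, not $(i-1)d+\ell$, since $\overline{T}_n(i,j)=b$ when $j=i-1$; this is exactly the bookkeeping you flag as the main obstacle, and the paper's proof resolves it the same way you propose.
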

\begin{proof}
Notably, $F_n^0 = \overline{T}_n$. Analogous to the elaboration of the proof of Lemma \ref{combine22}, observe that
\begin{enumerate}[\bf (i)]
\item adding $\dfrac{-c_d}{b}$ times of the $(1+d_{n-2})$th row, $\dfrac{-c_{d-1}}{b}$ times of the $d_{n-2}$th row, $\ldots$, $\dfrac{-c_1}{b}$ times of the $(2+d_{n-2}-d)$th row to the $(1+d_{n-3})$th row;
\item adding $\dfrac{-c_d}{b}$ times of the $(1+d_{n-2}-d)$th row, $\dfrac{-c_{d-1}}{b}$ times of the $(d_{n-2}-d)$th row, $\ldots$, $\dfrac{-c_1}{b}$ times of the $(2+d_{n-2}-2d)$th row to the $d_{n-3}$th row;
\item repeating similar operations above until adding $\dfrac{-c_d}{b}$ times of the $(1+d_{n-2}-d(d-1))$th row, $\dfrac{-c_{d-1}}{b}$ times of the $(d_{n-2}-d(d-1))$th row, $\ldots$, $\dfrac{-c_1}{b}$ times of the $(2+d_{n-2}-d^2)$th row to the $(2+d_{n-3}-d)$th row;
\end{enumerate}
it follows immediately that $\det \overline{T}_{n} = (-b)^{d^{n-2}} \det F^{1}_{n}$. Continuing similar steps recursively $(n-3)$ times derive that $\det\overline{T}_{n}=(-b)^{\sum^{n-2}_{i=2}d^i}\det F^{n-3}_{n}$. This completes the proof.
\end{proof}


\begin{proof}[Proof of Theorem \ref{thm:formula-Tn-d-children}]
It is seen that
\begin{align*}
\det{F^{n-3}_{n}}
&=\det
\left(
\begin{array}{ccccc}
c_1 & c_2 & \ldots & c_d & -b \\
b   & 0   &    &0 & \alpha_n \\
0   & b   &    &\vdots & \alpha_n  \\
\vdots  & \vdots   &  \ddots  &0 & \vdots  \\
0   & 0   &    & b& \alpha_n \\
\end{array}
\right)\\
&=(-b)^{d-1}\det
\left(
\begin{array}{cc}
c_1 & -\dfrac{(c_2+c_3+...+c_d) \alpha_n + b^2}{b} \\
b   & \alpha_n
\end{array}
\right)\\
&=(-b)^{d-1}(c \alpha_n+b^2)\\
&=(-b)^{d-1}\left[(-1)^{n-3}\dfrac{\sum^{n-2}_{j=0}(-1)^{j}c^{n-1-j} b^{j}}{b^{n-3}}+b^2\right]\\
&=(-b)^{d-1}\left[(-1)^{n-3}\dfrac{\sum^{n-2}_{j=0}(-1)^{j} c^{n-1-j}b^{j} + (-1)^{n-1} b^{n-1}}{b^{n-3}}\right]\\
&=(-b)^{d+2-n} \sum^{n-1}_{j=0}(-1)^{j}c^{n-1-j} b^{j}
\end{align*}
where $c = c_1 + c_2 + \cdots + c_d$.

Combining Lemmas \ref{combined1}, \ref{lem:alpha-n-general-case}, and \ref{combined2} infers that
\begin{align*}
\det T_n&=(-b)^{d^{n-1}-1}(b+c)\det\overline{T}_n\\
&=(b+c)(-b)^{d^{n-1}-1}(-b)^{{\sum^{n-2}_{i=2}}d^i}\det F^{n-3}_{n}\\
&=(b+c)(-b)^{(\sum^{n-1}_{i=2}d^{i})-1}(-1)^{n-3}(-b)^{d-1}\left[\sum^{n-1}_{j=0}(-1)^{j}c^{n-1-j}(b)^{j}\right]b^{3-n}\\
&=(-1)^{n-3}(b+c)(-b)^{(\sum^{n-1}_{i=1}d^{i})-2}b^{3-n} \left[\sum^{n-1}_{j=0} (-1)^{j} c^{n-1-j}b^{j}\right]\\
&=(-1)^{n-3}(b+c)(-b)^{(\sum^{n-1}_{i=1}d^{i})-2}b^{3-n} \left[\sum^{n-1}_{j=0} (-1)^{j}c^{n-1-j}b^{j}\right]\\
&=(-1)^{(\sum^{n-1}_{i=1}d^{i})+1-n}(b+c)b^{(\sum^{n-1}_{i=1}d^{i})+1-n} \left[\sum^{n-1}_{j=0} (-1)^{j}c^{n-1-j}b^{j}\right]
\end{align*}
The desired result then follows.
\end{proof}

From those cases we studied in Section 3, it is seen that the reversibility of $T_f$ depends on not only the parameters in the local rule but also $n$ and $m$. The reversibility for the general cases is much more complicated. Hence, we skip the case study for the compactness of this paper.

\section{Conclusion and Discussion}

This elucidation demonstrates that the reversibility problem of linear cellular automata defined on Cayley trees with periodic boundary condition relates to solving a polynomial derived from a recurrence relation and the coefficients of the local rules; as an example, the complete criteria of the reversibility of cellular automata over $\mathbb{Z}_2$, $\mathbb{Z}_3$, and some other specific case are addressed. Note that the grid of $\mathbb{Z}^d$ is a Cayley graph of free abelian group with $d$ generators; this makes the present study a possible approach for determining the reversibility of multidimensional cellular automata, which is known as a undecidable problem. The related exploration remains interesting.

\bibliographystyle{amsplain}
\bibliography{../../grece}

\end{document}